\newcommand{\mn}{\mathfrak n }
\newcommand{\mm}{\mathfrak m }
\newcommand{\mz}{\mathfrak z }
\newcommand{\mv}{\mathfrak v }
\newcommand{\mh}{\mathfrak h }
\newcommand{\ma}{\mathfrak a }
\newcommand{\mgg}{\mathfrak g }
\newcommand{\so}{\mathfrak{so} }
\newcommand{\bil}{g}
\newcommand{\lela}{ g(}
\newcommand{\rira}{)}
\newcommand{\lra}{\longrightarrow}
\newcommand{\R}{\mathbb R}
\newcommand{\C}{\mathbb C}
\DeclareMathOperator{\ad}{ad}
\numberwithin{equation}{section}
 \newtheorem{teo}{Theorem}[section]
 \newtheorem{pro}[teo]{Proposition}
 \newtheorem{cor}[teo]{Corollary}
 \newtheorem{lm}[teo]{Lemma}
 \newtheorem{defi}[teo]{Definition}
 \theoremstyle{definition}
 \newtheorem{ex}[teo]{Example}
 \newtheorem{remark}[teo]{Remark}
\newcommand{\nc}{\newcommand}
\nc{\Iso}{\operatorname{Iso}}
 \nc{\iso}{\mathfrak{iso}}
 \nc{\sso}{\mathfrak{so}}
\nc{\Ad}{\operatorname{Ad}} 
\nc{\Sym}{\mathrm{Sym}}
 \nc{\pr}{\operatorname{pr}} 
 \nc{\Dera}{\operatorname{Dera}} \nc{\Auto}{\operatorname{Auto}}
\begin{document}

\title{Killing forms on $2$-step nilmanifolds}
\author{Viviana del Barco}
\address{Universidad Nacional de Rosario, CONICET, Rosario, Argentina}
\thanks{V. del Barco partially supported by FAPESP
grant 2015/23896-5.}
\email{delbarc@fceia.unr.edu.ar}

\author{Andrei Moroianu}
\address{Laboratoire de Math\'ematiques d'Orsay, Univ. Paris-Sud, CNRS, Universit\'e Paris-Saclay, 91405 Orsay, France }
\email{andrei.moroianu@math.cnrs.fr}

\begin{abstract} We study left-invariant Killing $k$-forms on simply connected $2$-step nilpotent Lie groups endowed with a left-invariant Riemannian metric. 
For $k=2,3$, we show that every left-invariant Killing $k$-form is a sum of Killing forms on the factors of the de Rham decomposition. Moreover, on each irreducible factor,  non-zero Killing $2$-forms define (after some modification) a bi-invariant orthogonal complex structure and non-zero Killing $3$-forms arise only if the Riemannian Lie group is naturally reductive when viewed as a homogeneous space under the action of its isometry group. In both cases, $k=2$ or $k=3$, we show that the space of left-invariant Killing $k$-forms of an irreducible Riemannian 2-step nilpotent Lie group is at most one-dimensional.
\end{abstract}

\subjclass[2010]{53D25, 22E25, 53C30} 
\keywords{Killing forms, $2$-step nilpotent Lie groups, naturally reductive homogeneous spaces} 
\maketitle

\section{Introduction}

Killing forms on Riemannian manifolds are exterior forms whose covariant derivative with respect to the Levi-Civita connection is totally skew-symmetric. They have been introduced by physicists as first integrals of the equation of motion \cite{pw}, and were intensively studied in the mathematical literature, especially in the framework of special holonomy on compact manifolds \cite{bms}, \cite{au}, \cite{s}. A recent striking application of Killing forms was the classification of compact manifolds carrying two conformally equivalent non-homothetic metrics with special holonomy \cite{a}. 

Examples of manifolds with non-parallel Killing $k$-forms are however quite rare, except for $k=1$, where Killing $1$-forms are just metric duals of Killing vector fields. For $k=2$, non-trivial Killing $2$-forms exist on nearly Kähler manifolds (the fundamental form), on the round spheres
$\mathbb{S}^p$ (eigenforms corresponding to the least eigenvalue of the Laplacian on co-closed $2$-forms) and on some ambitoric and ambikähler manifolds in dimension $4$, cf. \cite{gm}. For $k=3$, non-trivial Killing $3$-forms can be constructed on nearly Kähler $6$-dimensional manifolds, on the round spheres, on Sasakian and $3$-Sasakian manifolds. Further examples are given by the fundamental $3$-form on every naturally reductive homogeneous space, and by the torsion of metric connections with parallel skew-symmetric torsion \cite{cms}. On the contrary, it was recently shown that compact Riemannian manifolds with negative scalar curvature do not carry any non-vanishing Killing $3$-form \cite{lau}.

A usual source of examples in Riemannian geometry is provided by simply connected Lie groups endowed with left-invariant metrics. On such manifolds, the study of left-invariant objects reduces to an algebraic equation on the level of the Lie algebra, which can be tackled under additional assumptions. In this paper we are going to investigate left-invariant Killing $k$-forms on $2$-step nilpotent Lie groups.

Our main result is the description of left-invariant Killing $2$- and $3$-forms on simply connected $2$-step nilpotent Lie groups endowed with left-invariant Riemannian metrics. We first show that every such form is a sum of left-invariant Killing forms on the factors in the de Rham decomposition. This reduces the problem to the study of de Rham irreducible Riemannian Lie groups $(N,g)$. 

Assuming that $(N,g)$ is irreducible, we show that Killing $2$-forms are obtained from bi-invariant orthogonal complex structures (see Definition \ref{def:bii}); this was also shown recently by A. Andrada and I. Dotti \cite{AD19}. In addition, using a result of C. Gordon \cite{GO2}, we prove that non-zero Killing $3$-forms exist if and only if $(N,g)$ is a naturally reductive homogeneous space under the action of its isometry group. Moreover, still under the irreducibility hypothesis, we show that the spaces of left-invariant Killing $2$-forms and Killing $3$-forms are at most one dimensional. 

As a consequence of the above results, we obtain the low-dimensional classification of simply connected 2-step nilpotent Lie groups carrying a left-invariant Riemannian metric with non-zero Killing $2$-forms (up to dimension $8$) or Killing $3$-forms (up to dimension $6$).

{\sc Acknowledgment.} We would like to thank the anonymous referee for several pertinent remarks and suggestions, which led to the low-dimensional classification results mentioned above, and to the global improvement of the presentation.

\section{Preliminaries: Riemannian geometry of $2$-step nilpotent Lie groups}
In this section we describe the geometry of $2$-step nilpotent Lie groups endowed with left-invariant Riemannian metrics by means of their Lie algebras and the inner product that is induced on them.

Let $N$ be a connected Lie group endowed with a left-invariant Riemannian metric $g$, and let $\mn$ denote the Lie algebra of $N$. Left translations by elements $a$ of the Lie group, $L_a:N\lra N$ with $h\mapsto L_a(h)=ah$, are isometries and the metric $g$ is determined by its value on the tangent space at the identity, which we identify with $\mn$. Let $\nabla$ denote the Levi-Civita connection of $(N,g)$. Koszul's formula evaluated on left-invariant vector fields $X,Y,Z$ on $N$ reads
\begin{equation}\label{eq.Koszul}
\lela \nabla_X Y,Z\rira=\frac12\{ \lela [X,Y],Z\rira+\lela [Z,X],Y\rira+\lela [Z,Y],X\rira\}.
\end{equation}

This formula shows in particular that the covariant derivative of a left-invariant vector field with respect to another left-invariant vector field is again left-invariant. From now on we will identify a left-invariant vector field $X$ with its value $x\in\mn$ at the origin, so that \eqref{eq.Koszul} becomes 
\begin{equation}\label{eq.Koszul1}
\nabla_xy=\frac12\left( [x,y]-\ad_x^*y-\ad_y^*x\right),
\end{equation}
where $\ad_x^*$ denotes the adjoint of $\ad_x$ with respect to the corresponding inner product $g$ on the Lie algebra of $N$.

The center and the commutator of a Lie algebra $\mn$ are, respectively,
\[\mz=\{z\in\mn\ |\   [x,z]=0, \,\mbox{for all }x\in\mn\},\qquad
\mn'=[\mn,\mn]:=\mathrm{span}\{ [x,y]\ |\   x,y\in\mn\}.
\]
A Lie algebra $\mn$ is said to be $2$-step nilpotent if it is non abelian and $\ad_x^2=0$ for all $x\in\mn$. 
Equivalently, $\mn$ is $2$-step nilpotent if $0\neq \mn'\subseteq \mz$.

\begin{remark} \label{class} Let $\mathcal N_p$ denote the set of isomorphism classes of real $2$-step nilpotent Lie algebras. It is well known that $\mathcal N_1=\mathcal N_2=\emptyset$, $\mathcal N_3=\{\mh_3\}$ (the real Heisenberg Lie algebra), $\mathcal N_4=\{\R\oplus \mh_3\}$, ${\rm card }(\mathcal N_5)=3$ and ${\rm card }(\mathcal N_6)=7$ (cf. \cite{MA} for the explicit list).
\end{remark}
 
For the rest of the section we assume that $\mn$ is $2$-step nilpotent and $N$ is its corresponding simply connected $2$-step nilpotent Lie group. We shall describe the main geometric properties of $(N,g)$ through linear objects in the metric Lie algebra $(\mn,g)$, following the work of Eberlein \cite{EB}.

Let $\mv$ be the orthogonal complement of $\mz$ in $\mn$ so that $\mn=\mv\oplus\mz$ as an orthogonal direct sum of vector spaces.  Each central element $z\in\mz$ defines an endomorphism $j(z):\mv\lra\mv$ by the equation
\begin{equation}\label{eq:jota}
\lela j(z)x,y\rira =\lela z,[x,y]\rira, \quad \mbox{ for all } x,y\in\mv.
\end{equation}
It is straightforward that $j(z)$ belongs to $\sso(\mv)$, the Lie algebra of skew-symmetric endomorphisms of $\mv$ with respect to $\bil$.

The linear map $j: \mz \to \sso(\mv)$ captures important geometric information of the Riemannian manifold $(N,\bil)$. In particular, using \eqref{eq.Koszul1} we readily obtain that the covariant derivative of left-invariant vector fields can be expressed as follows
\begin{equation}\label{eq:nabla}\left\{
\begin{array}{ll}
\nabla_x y=\frac12 \,[x,y] & \mbox{ if } x,y\in\mv,\\
\nabla_x z=\nabla_zx=-\frac12 j(z)x & \mbox{ if } x\in\mv,\,z\in\mz,\\
\nabla_z z'=0& \mbox{ if } z, z'\in\mz.
\end{array}\right.
\end{equation}

The linear map $j:\mz\lra \so(\mv)$ is injective if and only if the commutator of $\mn$ coincides with its center, that is, $\mn'=\mz$. 

\begin{remark}\label{an}
Let $\ma$ denote the kernel of $j:\mz\to\so(\mv)$ and let $\ma^\perp$ be its orthogonal in $\mz$. Then $\ma$ is an abelian ideal of $\mn$ and $\mn_0:=\mv\oplus\ma^\bot$ is a
$2$-step nilpotent ideal of $\mn$, whose commutator coincides with its center: $\mn_0'=\ma^\bot$. Moreover, $\mn=\mn_0\oplus \ma$ is an orthogonal direct sum.
\end{remark}

Note that by \eqref{eq:nabla}, the Levi-Civita covariant derivative vanishes whenever one of the arguments belongs to $\ma$, so $\ma$ corresponds to a flat Riemannian factor in the de Rham decomposition of $N$.

The following is a technical result that is included for future use.
\begin{lm} \label{lm:imjzv} Let $\mn=\mv\oplus\mz$ be a $2$-step nilpotent Lie algebra, then $\mv=\sum_{z\in\mz}{\rm Im} j(z)$. 
\end{lm}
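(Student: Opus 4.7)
The plan is to argue by contradiction via an orthogonality argument inside $\mv$. Set $W\ce\sum_{z\in\mz}\Im j(z)\subseteq\mv$ and suppose, for contradiction, that $W\neq \mv$. Let $v\in\mv$ be a nonzero vector orthogonal (with respect to $g$) to $W$ inside $\mv$; then $g(v,j(z)x)=0$ for every $z\in\mz$ and every $x\in\mv$.

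The first key step is to upgrade this orthogonality into vanishing of $j(z)v$. Since $j(z)\in\sso(\mv)$ is skew-symmetric,
\[
g(j(z)v,x)=-g(v,j(z)x)=0\qquad\text{for all }x\in\mv,
\]
so $j(z)v=0$ for every $z\in\mz$.

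The second step is to translate this back into a bracket relation using the defining formula \eqref{eq:jota}. For any $y\in\mv$ and $z\in\mz$,
\[
0=g(j(z)v,y)=g(z,[v,y]).
\]
Since $\mn$ is $2$-step nilpotent, $[v,y]\in\mn'\subseteq\mz$, and the identity above says that $[v,y]$ is orthogonal to every element of $\mz$. Hence $[v,y]=0$ for every $y\in\mv$. Combined with $[v,z]=0$ for every $z\in\mz$ (as $\mz$ is the center), this gives $[v,\mn]=0$, so $v\in\mz$. But $v\in\mv$ and $\mv\cap\mz=\{0\}$ by construction of $\mv$ as the orthogonal complement of $\mz$, so $v=0$, a contradiction. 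Therefore $W=\mv$.

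The argument is essentially a two-line duality; the only real content is to remember that $[\mv,\mv]$ lands inside $\mz$, which is what converts orthogonality with respect to all $z\in\mz$ into actual vanishing of brackets. I do not anticipate any serious obstacle.
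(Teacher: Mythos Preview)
Your proof is correct and follows essentially the same approach as the paper: both arguments use skew-symmetry of $j(z)$ to identify $W^\perp$ with $\bigcap_{z\in\mz}\ker j(z)$, then invoke the defining relation \eqref{eq:jota} to conclude that any element of this intersection lies in $\mz\cap\mv=0$. The paper presents these steps in the reverse order (first showing the intersection of kernels is trivial, then dualizing), but the content is identical.
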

\begin{proof} Suppose that some $x\in\mv$ belongs to the kernel of $j(z)$ for every $z\in\mz$. Then by \eqref{eq:jota}, $x$ commutes with every $y\in\mv$, so eventually $x\in\mz\cap\mv=0$. This shows that 
\begin{equation}\label{int}\bigcap_{z\in\mz}\ker j(z)=0.\end{equation}
Moreover $\ker j(z)={\rm Im} j(z)^\bot$ since $j(z)$ is skew-symmetric, thus $$0=\bigcap_{z\in\mz}{\rm Im} j(z)^\bot=(\sum_{z\in\mz}{\rm Im} j(z))^\bot$$ which gives $\mv=\sum_{z\in\mz}{\rm Im} j(z)$. 
\end{proof}

\section{Killing forms on Lie groups}
In this section we introduce the concept of (differential) Killing forms on arbitrary Riemannian manifolds and we present basic properties of the left-invariant ones on Lie groups.\smallskip

\begin{defi}
A Killing $k$-form on a Riemannian manifold $(M,g)$ is a differential $k$-form $\alpha$ that satisfies 
\begin{equation}\label{eq:Killform}
\nabla_Y \alpha=\frac1{k+1}Y\lrcorner\ {\rm d}\alpha
\end{equation}
for every vector field $Y$ in $M$, where $\lrcorner$ denotes the contraction of differential forms by vector fields.	
\end{defi}
Equivalently, $\alpha$ is a Killing form if and only if  $Y\lrcorner \nabla_Y\alpha=0$ for every vector field $Y$ (see \cite{Se03}). If $X$ is a vector field on $M$ and $\alpha$ is its metric dual $1$-form, i.e. $\alpha=\lela X, \cdot\rira$, then $\alpha$ is a Killing $1$-form if and only if $X$ is a Killing vector field. 
\smallskip

Let $N$ be a Lie group with Lie algebra $\mn$ and let $g$ be a left-invariant metric on $N$. A left-invariant differential $k$-form  $\alpha$ on $N$, i.e. such that $L_a^*\alpha=\alpha$ for all $a\in N$, is determined by its value at the identity; hence one can see such forms as elements in $\Lambda^k\mn^*$. The covariant and exterior derivatives preserve left-invariance, thus $\alpha\in \Lambda^k\mn^*$ is a Killing form if and only if 
\begin{equation}\label{eq:Killforminv}
\nabla_y \alpha=\frac1{k+1}\ y\lrcorner\ {\rm d}\alpha, \quad \mbox{ for all } y\in \mn,
\end{equation}
where ${\rm d}:\Lambda^k\mn^*\lra \Lambda^{k+1}\mn^*$ is the Lie algebra differential. 

As mentioned before, \eqref{eq:Killforminv} is  equivalent to $y\lrcorner \nabla_y\alpha=0$ for all $y\in \mn$. We shall now give a different expression of this condition. 

Recall that every skew-symmetric endomorphism $f$ of $\mn\simeq\mn^*$ extends as a derivation (also denoted by $f$) of the exterior algebra $\oplus_{k\ge 0}\Lambda^k\mn^*$ by the formula 
\begin{equation}\label{der} f(\alpha):=\sum_{i=1}^pf(u_i)\wedge u_i\lrcorner\alpha,\qquad \mbox{ for all }\alpha\in \Lambda^k\mn^*,
\end{equation}
where $\{u_1, \ldots, u_p\}$ is any orthonormal basis of $(\mn,\bil)$. 

Since $\nabla_y$ is a skew-symmetric endomorphism of $\mn$, given $\alpha\in \Lambda^k\mn^*$ and $y\in \mn$, we can write $\nabla_y\alpha=\sum_{i=1}^p\nabla_yu_i\wedge (u_i\lrcorner\ \alpha)$ and compute:
\begin{eqnarray}
y\lrcorner \nabla_y\alpha&=&y\lrcorner \sum_{i=1}^p\nabla_yu_i\wedge (u_i\lrcorner\ \alpha)= \sum_{i=1}^p\left(\lela y,\nabla_y u_i\rira u_i\lrcorner \alpha-\nabla_y u_i\wedge (y\lrcorner \ u_i\lrcorner\ \alpha)\right)\nonumber\\
&=& -\sum_{i=1}^p\lela u_i,\nabla_y y\rira u_i\lrcorner \alpha-\sum_{i=1}^p\nabla_y u_i\wedge (y\lrcorner \ u_i\lrcorner\ \alpha)\label{eq:f1}\\
&=&-\nabla_y y\lrcorner \alpha- \sum_{i=1}^p \nabla_y u_i\wedge (y\lrcorner \ u_i\lrcorner\ \alpha)\nonumber.
\end{eqnarray}
Therefore $\alpha\in \Lambda^k\mn^*$ defines a left-invariant Killing form $(N,g)$ if and only if 
\begin{equation*}
 \nabla_y y\lrcorner \alpha+ \sum_{i=1}^p \nabla_y u_i\wedge (y\lrcorner \ u_i\lrcorner\ \alpha)=0, \quad \mbox{ for all } y\in \mn.
\end{equation*}

We interpret this condition in the particular case of Riemannian $2$-step nilpotent Lie groups. Assume $(N,g)$ is a simply connected $2$-step nilpotent Lie group endowed with a left-invariant metric. In the notation of the preliminaries, the Lie algebra  $\mn$ of $N$  decomposes as a sum of orthogonal subspaces $\mn=\mv\oplus \mz$. Let $\{e_1,\ldots,e_n\}$ and $\{z_1, \ldots,z_m\}$ be orthonormal bases of $\mv$ and $\mz$, respectively.
\begin{pro} \label{pro:p1} A $k$-form $\alpha$ on a $2$-step nilpotent Lie algebra $\mn$ defines a left-invariant Killing form on $(N,g)$ if and only if for every $x\in \mv$ and $z\in \mz$ the following conditions hold:
\begin{eqnarray}
\sum_{i=1}^n [x,e_i]\wedge (x\lrcorner \ e_i\lrcorner\ \alpha) &=& \sum_{t=1}^mj(z_t)x\wedge (x\lrcorner \ z_t\lrcorner\ \alpha),\label{eq:p1}\\
\sum_{i=1}^n j(z)e_i\wedge (z\lrcorner \ e_i\lrcorner\ \alpha) &=&0,\label{eq:p2}\\
 \sum_{i=1}^n [x,e_i]\wedge (z\lrcorner \ e_i\lrcorner\ \alpha) &=&2(j(z)x)\lrcorner \alpha + \sum_{i=1}^n j(z)e_i\wedge (x\lrcorner \ e_i\lrcorner\ \alpha) \nonumber \\
&&\qquad\qquad\qquad+ \sum_{t=1}^mj(z_t)x\wedge (z\lrcorner \ z_t\lrcorner\ \alpha).\label{eq:p3}
\end{eqnarray}
\end{pro}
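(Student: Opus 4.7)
The plan is to apply the equivalent Killing condition
\[
\nabla_y y \lrcorner \alpha + \sum_{i=1}^p \nabla_y u_i \wedge (y \lrcorner u_i \lrcorner \alpha) = 0, \qquad \forall\, y \in \mn,
\]
which is precisely the display immediately preceding the statement, after writing $y = x + z$ with $x \in \mv$ and $z \in \mz$, and splitting the resulting identity into its bihomogeneous components in $(x,z)$. The three asserted equations \eqref{eq:p1}, \eqref{eq:p2} and \eqref{eq:p3} will appear as the pieces of bidegree $(2,0)$, $(0,2)$ and $(1,1)$, respectively.

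Concretely, I would take the orthonormal basis $\{u_i\} = \{e_1,\dots,e_n\}\cup\{z_1,\dots,z_m\}$ adapted to $\mn = \mv\oplus\mz$ and use \eqref{eq:nabla} to compute
\[
\nabla_y y = -j(z)x, \qquad \nabla_y e_i = \tfrac{1}{2}[x,e_i] - \tfrac{1}{2}j(z)e_i, \qquad \nabla_y z_t = -\tfrac{1}{2}j(z_t)x.
\]
Note in particular that $\nabla_y y$ has only a bidegree-$(1,1)$ contribution, with coefficient $-1$ (and not $-\tfrac12$), because $\nabla_x z$ and $\nabla_z x$ combine while $\nabla_x x$ and $\nabla_z z$ both vanish. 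Substituting these formulas and expanding $y\lrcorner u_i\lrcorner\alpha = x\lrcorner u_i\lrcorner\alpha + z\lrcorner u_i\lrcorner\alpha$ produces a polynomial identity of total degree $2$ in $(x,z)$ with values in $\Lambda^{k-1}\mn^*$.

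Since this identity must hold for every $(x,z)\in\mv\times\mz$, polarization forces each of its three bihomogeneous components to vanish separately. After clearing a common factor of $\tfrac12$, the $(2,0)$-part involves $[x,e_i]$ paired with $x\lrcorner e_i\lrcorner\alpha$ and $j(z_t)x$ paired with $x\lrcorner z_t\lrcorner\alpha$, giving \eqref{eq:p1}; the $(0,2)$-part reduces to $j(z)e_i$ paired with $z\lrcorner e_i\lrcorner\alpha$ and yields \eqref{eq:p2}. The $(1,1)$-part collects the term $-j(z)x\lrcorner\alpha$ coming from $\nabla_y y$ together with the four mixed cross products, and after multiplying by $2$ produces \eqref{eq:p3}; the factor $2$ on the right-hand side is exactly the discrepancy between the coefficient $-1$ carried by $\nabla_y y$ and the coefficient $-\tfrac12$ of the other terms. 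Conversely, summing the three identities reconstructs the Killing condition at $y = x+z$, so the three equations are jointly equivalent to $\alpha$ being Killing. The main obstacle is essentially just careful bookkeeping of signs and prefactors; there is no further conceptual ingredient.
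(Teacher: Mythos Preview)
Your argument is correct and is essentially the paper's own proof: the paper also writes $y=x+z$, expands $y\lrcorner\nabla_y\alpha$ via \eqref{eq:f1} and \eqref{eq:nabla} into the same seven terms, and then isolates the three conditions by specializing $z=0$, $x=0$, and subtracting---which is exactly your bihomogeneous splitting phrased differently. Your bookkeeping of the coefficient $-1$ on $\nabla_yy$ versus the $\tfrac12$'s elsewhere, and the resulting factor $2$ in \eqref{eq:p3}, is accurate.
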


\begin{proof} Let $y\in \mn$ and write $y=x+z$ with $x\in \mv$ and $ z\in \mz$. For any $\alpha\in \Lambda^k\mn^*$, we can use \eqref{eq:f1} and \eqref{eq:nabla} to describe the $(k-1)$-form 
$y\lrcorner \nabla_y\alpha$ in terms of the orthonormal bases of $\mv$  and $\mz$ above:
\begin{eqnarray}
y\lrcorner \nabla_y \alpha&=&-(j(z)x)\lrcorner \alpha + \frac12 \sum_{i=1}^n [x,e_i]\wedge (x\lrcorner \ e_i\lrcorner\ \alpha)+\frac12 \sum_{i=1}^n [x,e_i]\wedge (z\lrcorner \ e_i\lrcorner\ \alpha)  \label{eq:f5} \\
&& \qquad \qquad
-\frac12 \sum_{i=1}^n j(z)e_i\wedge (x\lrcorner \ e_i\lrcorner\ \alpha) -\frac12 \sum_{i=1}^n j(z)e_i\wedge (z\lrcorner \ e_i\lrcorner\ \alpha) \nonumber\\
&& \qquad  \qquad-\frac12 \sum_{t=1}^m j(z_t)x\wedge (x\lrcorner \ z_t\lrcorner\ \alpha)  -\frac12 \sum_{t=1}^m j(z_t)x\wedge (z\lrcorner \ z_t\lrcorner\ \alpha) . \nonumber
\end{eqnarray}
Using this expression, it is easy to verify that $y\lrcorner \nabla_y\alpha=0$  for all $y\in \mn$ whenever \eqref{eq:p1}--\eqref{eq:p3} hold, hence $\alpha$ is a Killing form in this case.

Suppose now that $\alpha$ is a Killing form and hence $y\lrcorner \nabla_y\alpha=0$ for all $y\in \mn$. In particular, taking $y=x\in \mv$ (i.e. $z=0$) in \eqref{eq:f1} and using $x\lrcorner \nabla_x\alpha=0$ we obtain \eqref{eq:f5}.
 Similarly, putting $y=z\in \mz$ in \eqref{eq:f5} we get \eqref{eq:p2}. Replacing these two conditions in \eqref{eq:f5} for an arbitrary $y\in \mn$, and using the fact that $y\lrcorner \nabla_y\alpha=0$ we obtain \eqref{eq:p3}.
\end{proof}

The decomposition $\mn=\mv\oplus\mz$ induces a decomposition of the space of $k$-forms on $\mn$ 
\begin{equation}\label{eq:lambdad}
\Lambda^k\mn^*=\bigoplus_{l=0}^k \Lambda^l\mv^*\otimes \Lambda^{k-l}\mz^*.
\end{equation}
Accordingly, for $\sigma\in \Lambda^k\mn^*$, we write $\sigma_l$ for the projection of $\sigma$ on $\Lambda^l\mv^*\otimes \Lambda^{k-l}\mz^*$ with respect to this decomposition.

Given a  $k$-form $\alpha$ on $\mn$, the three equations in  Proposition \ref{pro:p1} give equalities between $(k-1)$-forms determined by different contractions of $\alpha$. Projecting these equalities to $\Lambda^l\mv^*\otimes \Lambda^{k-1-l}\mz^*$ for each $l=0, \ldots, k-1$, we get the following result.
\begin{cor} \label{cor:killgen} A $k$-form $\alpha$ on a $2$-step nilpotent Lie algebra $\mn$ defines a left-invariant Killing form on $(N,g)$ if and only if for every $l=0,\ldots,k-1$, $x\in \mv$ and $z\in \mz$ the following conditions hold:
\begin{eqnarray}
\sum_{i=1}^n [x,e_i]\wedge (x\lrcorner \ e_i\lrcorner\ \alpha_{l+2}) &=& \sum_{t=1}^mj(z_t)x\wedge (x\lrcorner \ z_t\lrcorner\ \alpha_l),\label{eq:pp1}\\
 \sum_{i=1}^n j(z)e_i\wedge (z\lrcorner \ e_i\lrcorner\ \alpha_l) &=&0,\label{eq:pp2}\\
\sum_{i=1}^n [x,e_i]\wedge (z\lrcorner \ e_i\lrcorner\ \alpha_{l+1})&=&2( j(z)x)\lrcorner \alpha_{l+1} + \sum_{i=1}^n j(z)e_i\wedge (x\lrcorner \ e_i\lrcorner\ \alpha_{l+1})\label{eq:pp3}\\
&&\qquad+ \sum_{t=1}^m j(z_t)x\wedge (z\lrcorner \ z_t\lrcorner\ \alpha_{l-1}). \nonumber
\end{eqnarray}
\end{cor}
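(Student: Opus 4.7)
The plan is to derive Corollary \ref{cor:killgen} directly from Proposition \ref{pro:p1} by projecting each of the equations \eqref{eq:p1}--\eqref{eq:p3} (identities in $\Lambda^{k-1}\mn^*$) onto the summands of the bigrading \eqref{eq:lambdad}. Since $\Lambda^{k-1}\mn^*=\bigoplus_{l=0}^{k-1}\Lambda^l\mv^*\otimes\Lambda^{k-1-l}\mz^*$ is a direct sum, an equality of $(k-1)$-forms is equivalent to the system of equalities obtained by projecting both sides onto each $\Lambda^l\mv^*\otimes\Lambda^{k-1-l}\mz^*$. Hence the ``if and only if'' of the corollary will follow at once from Proposition \ref{pro:p1}, provided one identifies, term by term, which component $\alpha_s$ contributes to each bihomogeneous projection.

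The key observation is that $2$-step nilpotency, together with the definition \eqref{eq:jota} of $j$, forces clean bidegree shifts on every operation occurring in \eqref{eq:p1}--\eqref{eq:p3}. Indeed, $[x,e_i]\in\mn'\subseteq\mz$ for $x,e_i\in\mv$, so wedging with $[x,e_i]$ raises the $\mz$-degree by $1$; since $j(z)v\in\mv$ for $v\in\mv$, wedging with $j(z)x$ or $j(z)e_i$ raises the $\mv$-degree by $1$; and contraction by an element of $\mv$ (resp.\ $\mz$) lowers the $\mv$-degree (resp.\ $\mz$-degree) by $1$. Starting from $\alpha_s\in\Lambda^s\mv^*\otimes\Lambda^{k-s}\mz^*$, the bidegree of each term of \eqref{eq:p1}--\eqref{eq:p3} can then be read off as a function of $s$, and the condition that it equal $(l,k-1-l)$ pins down the value of $s$.

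Carrying this out term by term: for \eqref{eq:p1}, the left-hand side applied to $\alpha_s$ has bidegree $(s-2,k-s+1)$ and the right-hand side has bidegree $(s,k-s-1)$, giving $s=l+2$ on the left and $s=l$ on the right, which is \eqref{eq:pp1}; for \eqref{eq:p2}, every term applied to $\alpha_s$ has bidegree $(s,k-s-1)$, forcing $s=l$ and yielding \eqref{eq:pp2}; for \eqref{eq:p3}, the first three terms all have bidegree $(s-1,k-s)$ (so $s=l+1$) while the last term has bidegree $(s+1,k-s-2)$ (so $s=l-1$), producing \eqref{eq:pp3}. No genuine difficulty is expected: the argument is purely a bookkeeping exercise made tractable by $2$-step nilpotency, with the harmless boundary conventions $\alpha_{-1}=\alpha_{k+1}=0$ handling the extreme values $l=0$ and $l=k-1$.
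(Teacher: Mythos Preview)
Your proposal is correct and follows exactly the approach indicated in the paper: the corollary is obtained from Proposition~\ref{pro:p1} by projecting each of \eqref{eq:p1}--\eqref{eq:p3} onto the summands $\Lambda^l\mv^*\otimes\Lambda^{k-1-l}\mz^*$ of the bigrading \eqref{eq:lambdad}. Your bidegree bookkeeping is accurate and in fact more detailed than what the paper supplies, which merely states that the projection yields the result and then notes the boundary vanishings at $l=0$ and $l=k-1$.
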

Notice that for $l=0$ the right hand side of \eqref{eq:pp1}, the left hand side of \eqref{eq:pp2}, and the last two terms of \eqref{eq:pp3} vanish, independently of the degree of $\alpha$. 
The same occurs for $l=k-1$ in the left hand side of \eqref{eq:pp1}.

\section{Invariant Killing $2$-forms on $2$-step nilpotent Lie groups}

In this section we apply Corollary \ref{cor:killgen} to show that Killing $2$-forms on metric $2$-step nilpotent Lie algebras are closely related to bi-invariant orthogonal complex structures. We prove that irreducible $2$-step nilpotent Lie algebras admit at most one bi-invariant complex structure, up to sign. We obtain that the space of Killing $2$-forms in such Lie algebras is at most one-dimensional. Through the de Rham decomposition we achieve the full description of the space of left-invariant Killing $2$-forms on $(N,g)$.\medskip

Let $\alpha$ be a $2$-form on a metric $2$-step nilpotent Lie algebra $(\mn,g)$ and consider its decomposition with respect to \eqref{eq:lambdad}, so that $\alpha=\alpha_2+\alpha_1+\alpha_0$. Using the metric, the $2$-form $\alpha$ is identified with a skew-symmetric endomorphism of $\mn$. Viewed as endomorphisms, $\alpha_2$ vanishes on $\mz$ and preserves $\mv$, $\alpha_1$ maps $\mv$ to $\mz$ and $\mz$ to $\mv$, and $\alpha_0$ vanishes on $\mv$ and preserves $\mz$. The next proposition is a restatement of Corollary 3.2. in \cite{BDS}.
\begin{pro} \label{pro:kill2} With the notation above, $\alpha\in \Lambda^2\mn^*$ is a Killing $2$-form if and only if $\alpha_1=0$ and the endomorphisms $\alpha_0$ and $\alpha_2$ satisfy
\begin{equation}\label{eq:kill2}
 j(\alpha_0z)=3\alpha_2j(z)=-3j(z)\alpha_2, \quad \mbox{ for all }z\in \mz.
\end{equation}
\end{pro}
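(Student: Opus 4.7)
The plan is to apply Corollary \ref{cor:killgen} with $k=2$, for which the degree-$l$ components are $\alpha_0\in\Lambda^2\mz^*$, $\alpha_1\in\mv^*\otimes\mz^*$, and $\alpha_2\in\Lambda^2\mv^*$; and to identify each component with an endomorphism via the metric, writing $\alpha_2=A\in\sso(\mv)$ (vanishing on $\mz$), $\alpha_0=C\in\sso(\mz)$ (vanishing on $\mv$), and $\alpha_1$ as a linear map $B\colon\mv\to\mz$ with $\alpha_1(x,z)=\langle Bx,z\rangle$. Note that for any 1-form $\xi\in\mv^*$, $z\lrcorner\xi=0$, and for $\eta\in\mz^*$, $x\lrcorner\eta=0$, so most ``mixed'' contractions vanish immediately. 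In particular \eqref{eq:pp2} for $l=0$ is automatic (since $e_i\lrcorner\alpha_0=0$), as is \eqref{eq:pp1} for $l=1$ and the RHS of \eqref{eq:pp1} for $l=0$, leaving us with only four nontrivial equations to analyze.

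First I would show that $\alpha_1=0$. Applying \eqref{eq:pp3} with $l=0$, the LHS reduces to $[x,B^\ast z]\in\mz$ and the RHS, after killing the vanishing mixed contractions, to $2Bj(z)x\in\mz$; so the condition reads $[x,B^\ast z]=2Bj(z)x$ for all $x\in\mv$, $z\in\mz$. Pairing with $z'\in\mz$ and using $\langle[x,B^\ast z],z'\rangle=\langle j(z')x,B^\ast z\rangle=\langle Bj(z')x,z\rangle$, we get $\langle Bj(z')x,z\rangle=2\langle Bj(z)x,z'\rangle$; exchanging $z\leftrightarrow z'$ and substituting back yields $\langle Bj(z')x,z\rangle=4\langle Bj(z')x,z\rangle$, so $B$ vanishes on $\sum_{z'\in\mz}\Im j(z')=\mv$ by Lemma \ref{lm:imjzv}. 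Thus $B=0$, i.e.\ $\alpha_1=0$, and equations \eqref{eq:pp2}, \eqref{eq:pp3} with $l=1$ also hold trivially.

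Next I would exploit the two remaining nontrivial conditions. Equation \eqref{eq:pp1} with $l=0$ becomes $[x,Ax]=0$ for all $x\in\mv$ (since the RHS vanishes). Polarizing $\langle[x,Ax],z\rangle=\langle j(z)x,Ax\rangle=0$ and using that $A,j(z)$ are skew-symmetric, a short manipulation shows this is equivalent to $j(z)A+Aj(z)=0$, i.e.\ $A$ anticommutes with every $j(z)$. Equation \eqref{eq:pp3} with $l=1$, after rewriting each term using $e_i\lrcorner\alpha_2=Ae_i$ and $z_t\lrcorner\alpha_0=Cz_t$, produces $2Aj(z)-j(z)A-j(Cz)=0$ (the LHS vanishes because $z\lrcorner e_i\lrcorner\alpha_2=\langle z,Ae_i\rangle=0$). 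Combining the two conditions gives $j(Cz)=2Aj(z)-j(z)A=3Aj(z)=-3j(z)A$, which is exactly \eqref{eq:kill2}.

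For the converse I would simply check that if $\alpha_1=0$ and \eqref{eq:kill2} holds, then $A$ anticommutes with all $j(z)$ (from $3Aj(z)=-3j(z)A$), and working backwards through the polarization identity gives $[x,Ax]=0$, so \eqref{eq:pp1}, \eqref{eq:pp2}, \eqref{eq:pp3} all hold for $l=0,1$. The main obstacle is the bookkeeping in step one: one must carefully track which contractions survive under the bigrading, since the identification of $\alpha_1$ with a map $\mv\to\mz$ (or its transpose) is responsible for sign conventions, and the argument forcing $\alpha_1=0$ hinges on combining the constraint extracted from \eqref{eq:pp3} ($l=0$) with Lemma \ref{lm:imjzv} rather than with \eqref{eq:pp2} alone.
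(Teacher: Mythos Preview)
Your proof is correct and follows essentially the same route as the paper: both arguments specialize Corollary~\ref{cor:killgen} to $k=2$, use \eqref{eq:pp3} at $l=0$ together with the swap $z\leftrightarrow z'$ and Lemma~\ref{lm:imjzv} (equivalently \eqref{int}) to force $\alpha_1=0$, extract the anticommutation $j(z)\alpha_2+\alpha_2 j(z)=0$ from \eqref{eq:pp1} at $l=0$, and then read off $j(\alpha_0 z)=3\alpha_2 j(z)$ from \eqref{eq:pp3} at $l=1$.

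Two small bookkeeping slips to fix (neither affects the logic): first, \eqref{eq:pp1} at $l=1$ is \emph{not} automatic before you know $\alpha_1=0$ --- its right-hand side is $-j(Bx)x$, which only vanishes once $B=0$; second, at the end of your second paragraph you write that ``\eqref{eq:pp3} with $l=1$'' holds trivially, but this is precisely the nontrivial equation you (correctly) analyze in the next paragraph --- you presumably meant \eqref{eq:pp1} with $l=1$ there.
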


\begin{proof}
Let $\{e_1, \ldots, e_n\}$, $\{z_1, \ldots,z_m\}$ be orthonormal bases of $\mv$ and $\mz$, respectively. If $\alpha$ is a Killing $2$-form, then \eqref{eq:pp1} for $l=0$ gives
$$0=\sum_{i=1}^n[x,e_i]\wedge( x\lrcorner e_i\lrcorner \alpha_2)=[\alpha_2x,x], \quad \mbox{ for all } x\in \mv.$$
Polarizing this equality, we get that for every $x,y\in \mv$, 
$$[\alpha_2x,y]=[x,\alpha_2y].$$
By \eqref{eq:jota}, this equality is equivalent to
\begin{equation}
\label{eq:comm}
j(z)\alpha_2+\alpha_2j(z)=0,\quad  \mbox{ for all }z\in \mz. 
\end{equation}
In addition, \eqref{eq:pp3} for $l=0$ gives
$$\sum_{i=1}^n[x,e_i]\lela \alpha_1z,e_i\rira=-2\alpha_1j(z)x,$$
which can be rewritten as $j(z')\alpha_1z=2j(z)\alpha_1z'$ for all $z,z'\in \mz$. Interchanging the roles of $z$ and $z'$ yields $j(z)\alpha_1(z')=0$ for all $z,z'\in \mz$ and thus $\alpha_1(\mz)$ is in the kernel of $j(z)$ for all $z\in \mz$. From Eq. \eqref{int} we obtain $\alpha_1=0$. 

Finally, taking $l=1$ in  \eqref{eq:pp3} yields
$$0=2\alpha_2j(z)x-j(z)\alpha_2x-j(\alpha_0z)x,\quad \mbox{ for all }x\in \mv$$ which together with \eqref{eq:comm} gives $3\alpha_2j(z)=j(\alpha_0z)$ for all $z\in \mz$.
 
For the converse, consider  a $2$-form $\alpha=\alpha_2+\alpha_0$ on $\mn$ satisfying \eqref{eq:kill2}. Eq. \eqref{eq:pp2} for $l=0$ holds trivially. Moreover, from the computations above it is clear that $\alpha$ verifies \eqref{eq:pp3} for $l=0,1$ and
\eqref{eq:pp1} for $l=0$. Since $\alpha_1=0$, \eqref{eq:pp1} and \eqref{eq:pp2} are trivially satisfied for $l=1$. Hence $\alpha$ is a Killing form by Corollary \ref{cor:killgen}.
\end{proof}

Consider the decomposition of $\mn$ as an orthogonal direct sum of ideals $\mn=\ma\oplus\mn_0$, as in Remark \ref{an}, where $\ma$ is the kernel of $j:\mz\lra \so(\mv)$.

\begin{pro}\label{pro:sina} Every Killing $2$-form on $\mn$ is the sum of a Killing $2$-form on $\mn_0$ and a $2$-form on $\ma$.
\end{pro}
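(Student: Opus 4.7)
The plan is to use the characterization in Proposition \ref{pro:kill2} to write a Killing $2$-form as $\alpha=\alpha_2+\alpha_0$ (with $\alpha_1=0$) and then decompose $\alpha_0$ using the splitting $\mz=\ma\oplus\ma^\perp$. The whole proof should boil down to showing that $\alpha_0$, viewed as a skew-symmetric endomorphism of $\mz$, preserves the subspace $\ma=\ker j$.

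First, I would invoke Proposition \ref{pro:kill2} to reduce to the relations
\[
\alpha_1=0,\qquad j(\alpha_0 z)=3\alpha_2 j(z)=-3j(z)\alpha_2\qquad \text{for all } z\in\mz.
\]
For $z\in\ma$ we have $j(z)=0$, so the relation forces $j(\alpha_0 z)=0$, i.e.\ $\alpha_0 z\in\ma$. Thus $\alpha_0$ preserves $\ma$, and by skew-symmetry it also preserves $\ma^\perp$. Consequently the mixed component of $\alpha_0$ in $\ma^*\otimes(\ma^\perp)^*$ vanishes, and we can write $\alpha_0=\beta+\gamma$ with $\beta\in\Lambda^2\ma^*$ and $\gamma\in\Lambda^2(\ma^\perp)^*$.

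Next, I would regroup $\alpha=(\alpha_2+\gamma)+\beta$. Since $\alpha_2\in\Lambda^2\mv^*$ and $\gamma\in\Lambda^2(\ma^\perp)^*$, the first summand lies in $\Lambda^2\mn_0^*$, while $\beta$ is a $2$-form on $\ma$. To conclude, I would verify that $\alpha_2+\gamma$ satisfies the hypotheses of Proposition \ref{pro:kill2} applied to the metric $2$-step nilpotent Lie algebra $\mn_0=\mv\oplus\ma^\perp$, whose center is $\ma^\perp$ and whose $j$-map $j_{\mn_0}$ is the restriction of $j$ to $\ma^\perp$. For $z\in\ma^\perp$ we have $\gamma z=\alpha_0 z\in\ma^\perp$ (by the preservation step), so $j_{\mn_0}(\gamma z)=j(\alpha_0 z)$, and the Killing relation restricted to $z\in\ma^\perp$ gives exactly
\[
j_{\mn_0}(\gamma z)=3\alpha_2 j_{\mn_0}(z)=-3j_{\mn_0}(z)\alpha_2.
\]
Applying Proposition \ref{pro:kill2} in the reverse direction to $\mn_0$ shows that $\alpha_2+\gamma$ is a Killing $2$-form on $\mn_0$.

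The only genuine point in the argument is the observation that $\alpha_0(\ma)\subseteq\ma$, which follows in one line from the injectivity of $j$ on $\ma^\perp$ combined with the Killing equation; everything else is a bookkeeping exercise with the decomposition $\mn=\ma\oplus\mv\oplus\ma^\perp$. I do not anticipate any substantial obstacle.
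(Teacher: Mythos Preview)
Your proposal is correct and follows essentially the same route as the paper: use Proposition~\ref{pro:kill2} to write $\alpha=\alpha_2+\alpha_0$, observe from \eqref{eq:kill2} that $\alpha_0$ preserves $\ma=\ker j$ (hence also $\ma^\perp$ by skew-symmetry), and split accordingly. The paper's argument is more terse and leaves implicit the verification that the $\mn_0$-component is Killing, which you spell out explicitly.
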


\begin{proof}
Let $\alpha=\alpha_2+\alpha_0$ be a Killing $2$-form on $\mn$. By \eqref{eq:kill2}, $\alpha_0$ preserves $\ma=\ker j$ and, being skew-symmetric, it also preserves its orthogonal $\mn_0$ in $\mn$. Hence $\alpha$ is the sum of a 2-form in $\Lambda^2\ma^*$ (which is automatically parallel)
and a $2$-form in $\Lambda^2\mn_0^*$ which is Killing on $\mn_0$.
\end{proof}
As a consequence of this proposition we can focus the study of Killing $2$-forms on $2$-step nilpotent Lie algebras for which $j:\mz\lra \so(\mv)$ is  injective. We assume this condition for the rest of the section.\medskip

\begin{defi}\label{irr}
A metric nilpotent Lie algebra $(\mn,\bil)$ is called {\em reducible} if it can be written as an orthogonal direct sum of ideals $\mn=\mn'\oplus\mn''$. In this case we consider $(\mn',\bil')$ and $(\mn'',g'')$ as metric Lie algebras, where $\bil', \bil''$ are the restriction of $\bil$ to $\mn',\mn''$, respectively. Otherwise, $(\mn,g)$ is called {\em irreducible}. 
\end{defi}

\begin{pro} \label{pro:dec2} Suppose $(\mn,g)$ is reducible and write $\mn=\mn'\oplus\mn''$ as an orthogonal direct sum of ideals. Then any Killing $2$-form $\alpha$ on $\mn$ can be written
as $\alpha=\alpha'+\alpha''$ where $\alpha'$ (resp. $\alpha''$) is a Killing $2$-form on $\mn'$ (resp. $\mn''$).
\end{pro}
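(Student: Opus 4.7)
The plan is to exploit the algebraic characterisation of Killing $2$-forms from Proposition~\ref{pro:kill2}: writing $\alpha=\alpha_0+\alpha_2$, we have $j(\alpha_0 z)=-3j(z)\alpha_2=3\alpha_2 j(z)$ for every $z\in\mz$. I want to show that both skew endomorphisms $\alpha_0$ and $\alpha_2$ respect the decompositions $\mz=\mz'\oplus\mz''$ and $\mv=\mv'\oplus\mv''$ coming from $\mn=\mn'\oplus\mn''$, where $\mz'$, $\mz''$ denote the centers and $\mv'$, $\mv''$ their orthogonal complements inside $\mn'$, $\mn''$. Since $\mn'$ and $\mn''$ are orthogonal ideals with $[\mn',\mn'']=0$, one checks that $j(z')$ vanishes on $\mv''$ and coincides on $\mv'$ with the corresponding map $j'(z')$ for $\mn'$, and symmetrically for $z''\in\mz''$. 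The standing injectivity hypothesis on $j$ then transfers to $j'$ and $j''$; in particular each nontrivial factor is again genuinely $2$-step nilpotent, so Lemma~\ref{lm:imjzv} is available inside each of them.

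Decompose $\alpha_2=\alpha_2'+\alpha_2''+C$ and $\alpha_0=\alpha_0'+\alpha_0''+D$, with $C$, $D$ the off-diagonal cross parts. To kill $C$ I plug $z=z'\in\mz'$ and $x=x'\in\mv'$ into the anticommutation $\alpha_2 j(z')+j(z')\alpha_2=0$: since $j(z')|_{\mv''}=0$, the $\mv''$-component of this identity collapses to $C(j'(z')x')=0$, and varying $z'$, $x'$ together with Lemma~\ref{lm:imjzv} applied inside $\mn'$ gives $C|_{\mv'}=0$. By symmetry $C|_{\mv''}=0$, so $C=0$. To kill $D$ I insert $z=z'\in\mz'$ and $x=x''\in\mv''$ into $j(\alpha_0 z')=-3j(z')\alpha_2$: once we know $C=0$, the right-hand side vanishes because $\alpha_2$ now preserves $\mv''$ and $j(z')$ kills $\mv''$, while the left-hand side on $\mv''$ is $j''(Dz')x''$. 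Hence $Dz'\in\ker j''=0$, and symmetrically $D=0$.

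Once $C=D=0$, the decomposition $\alpha=\alpha'+\alpha''$ with $\alpha'=\alpha_0'+\alpha_2'$ and $\alpha''=\alpha_0''+\alpha_2''$ is immediate. Restricting the identity $j(\alpha_0 z')=-3j(z')\alpha_2$ to $\mv'$ yields $j'(\alpha_0'z')=-3j'(z')\alpha_2'$ for every $z'\in\mz'$, so that $\alpha'$ is Killing on $\mn'$ by Proposition~\ref{pro:kill2}; the same computation handles $\alpha''$. I expect the only delicate step to be the bookkeeping of cross terms, in particular checking carefully that the global injectivity of $j$ really passes to both ideals so that Lemma~\ref{lm:imjzv} can be invoked inside each of them and $\ker j''=0$ can be used to conclude $D=0$.
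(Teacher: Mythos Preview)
Your proof is correct and follows essentially the same strategy as the paper's: use the characterization from Proposition~\ref{pro:kill2} to show that the skew endomorphisms $\alpha_0$ and $\alpha_2$ respect the block decompositions $\mz=\mz'\oplus\mz''$ and $\mv=\mv'\oplus\mv''$. The paper carries this out via the equivalent bracket formulation $\alpha_0[x,y]=3[\alpha_2 x,y]=3[x,\alpha_2 y]$ and thereby avoids the explicit appeal to Lemma~\ref{lm:imjzv} and to injectivity of $j'$, $j''$, but the two executions are interchangeable.
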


\begin{proof} Let $\alpha$ be a Killing $2$-form on $\mn$. By Proposition \ref{pro:kill2}, $\alpha=\alpha_2+\alpha_0$ where $\alpha_2\in \so(\mv)$ and $\alpha_0\in \so(\mz)$ satisfy \eqref{eq:kill2}, which is equivalent to 
\begin{equation}\label{eq:alphaxy}
\alpha_0[x,y]=3[\alpha_2x,y]=3[x,\alpha_2y], \quad \mbox{for all }x,y\in \mv.
\end{equation}

The hypothesis implies that $\mv=\mv'\oplus\mv''$ and $\mz=\mz'\oplus\mz''$ where $\mn'=\mv'\oplus\mz'$ and $\mn''=\mv''\oplus\mz''$. Let $x\in \mv'$, then for any $y\in \mv''$ we have
$3[\alpha_2x,y]=\alpha_0[x,y]=0$. Therefore $\alpha_2x\in \mv'$ so $\alpha_2$ preserves $\mv'$ and hence its orthogonal in $\mv$, namely $\mv''$. 

Since $j$ in injective, we have $\mz=[\mv,\mv]=[\mv',\mv']+[\mv'',\mv'']\subset \mz'+\mz''$, whence $[\mv',\mv']=\mz'$ and $[\mv'',\mv'']=\mz''$. By \eqref{eq:alphaxy} we immediately obtain that $\alpha_0$ preserves $\mz'$ and $\mz''$.

Thus $\alpha=\alpha'+\alpha''$ where $\alpha'$ and $\alpha''$ are Killing $2$-forms on $\mn'$ and $\mn''$, respectively, by Proposition \ref{pro:kill2}.
\end{proof}

Along the rest of the section, we focus on irreducible $2$-step nilpotent Lie algebras and show that non-zero Killing $2$-forms appear only when the Lie algebra is complex.

\begin{defi} \label{def:bii} A bi-invariant complex structure on a Lie algebra $\mn$ is an endomorphism $J$ of $\mn$ which verifies $J^2=-1$ and 
\begin{equation}\label{j}
 J[x,y]=[Jx,y] \qquad \mbox{ for all } x,y \in \mn. 
\end{equation}
\end{defi}
Notice that $\mn$ admits a bi-invariant complex structure if and only if $(\mn,J)$ is a complex Lie algebra.

\begin{lm} \label{mm} Let $S$ be a symmetric endomorphism of $\mn$ satisfying 
\begin{equation}\label{m}S[x,y]=[Sx,y]\qquad \mbox{ for all } x,y \in \mn. 
\end{equation}
If $\mn$ is irreducible, then $S$ is a multiple of the identity. 
\end{lm}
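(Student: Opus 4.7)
The strategy is to exploit the fact that a symmetric endomorphism is orthogonally diagonalizable, and then show that its eigenspaces are ideals of $\mn$; the irreducibility hypothesis will then force there to be a single eigenvalue.

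First I would observe that the hypothesis \eqref{m}, combined with the skew-symmetry of the bracket, gives the apparently stronger identity $[Sx,y]=[x,Sy]$ for all $x,y\in\mn$, but what actually matters is the original form: if $x$ lies in an eigenspace of $S$ with eigenvalue $\lambda$, then
\[
S[x,y]=[Sx,y]=\lambda[x,y],
\]
so $[x,y]$ lies in the same eigenspace. Hence each eigenspace $\mn_\lambda:=\ker(S-\lambda\,\mathrm{Id})$ is an ideal of $\mn$.

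Next, since $S$ is symmetric with respect to $g$, it is diagonalizable with real eigenvalues and the eigenspaces for distinct eigenvalues are $g$-orthogonal. Writing $\mathrm{Spec}(S)=\{\lambda_1,\dots,\lambda_r\}$, we therefore obtain an orthogonal decomposition
\[
\mn=\mn_{\lambda_1}\oplus\cdots\oplus\mn_{\lambda_r}
\]
into ideals of $\mn$. If $r\ge 2$, then $\mn_{\lambda_1}$ and its orthogonal complement $\mn_{\lambda_2}\oplus\cdots\oplus\mn_{\lambda_r}$ are both non-zero ideals of $\mn$, contradicting the irreducibility hypothesis (Definition \ref{irr}). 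Hence $r=1$, i.e.\ $S=\lambda_1\,\mathrm{Id}$.

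There isn't really a hard step here: the only content is recognizing that \eqref{m} forces the $S$-eigenspaces to be (Lie-algebra) ideals, after which symmetry of $S$ provides the orthogonality needed to invoke irreducibility. The slight subtlety worth pointing out is that $[y,x]=-[x,y]\in\mn_\lambda$ as well, so $\mn_\lambda$ is a two-sided ideal, as required by Definition \ref{irr}.
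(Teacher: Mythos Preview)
Your argument is correct and in fact cleaner than the paper's. The paper first observes that $S$ preserves $\mz$ (and hence $\mv$), then decomposes $\mv$ into eigenspaces $\mv_i$ of $S|_\mv$, shows $[\mv_i,\mv_j]=0$ for $i\ne j$, defines $\mz_i:=[\mv_i,\mv_i]$, and uses the standing assumption that $j$ is injective to conclude $\mz=\bigoplus_i\mz_i$; finally $\mn_i:=\mv_i\oplus\mz_i$ are orthogonal ideals. You bypass all of this by noting directly that \eqref{m} forces each full eigenspace $\mn_\lambda\subset\mn$ to be an ideal, and symmetry of $S$ makes the eigenspace decomposition orthogonal. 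This avoids the splitting $\mn=\mv\oplus\mz$, does not use the injectivity of $j$, and indeed does not use $2$-step nilpotency at all---so your proof actually establishes the lemma for an arbitrary metric Lie algebra. The paper's route, on the other hand, makes the compatibility of $S$ with the structural decomposition $\mv\oplus\mz$ explicit, which is convenient for the applications that follow (e.g.\ Proposition~\ref{pro:kb}), but is not logically needed for the lemma itself.
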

\begin{proof}
Eq. \eqref{m} shows that $Sx\in\mz$ for every $x\in\mz$, so $S$ preserves $\mz$. Since $S$ is symmetric, it also preserves its orthogonal $\mv$. 

Let $\mv=\mv_1\oplus \ldots\oplus  \mv_s$ be the orthogonal decomposition of $\mv$ in eigenspaces of $S|_\mv$, so that $S|_{\mv_i}=\lambda_i {\rm Id}_{\mv_i}$ where $\lambda_1, \ldots, \lambda_s$ are distinct real numbers. Notice that $[\mv_i,\mv_j]=0$ if $i\neq j$, indeed for $x\in \mv_i$, $y\in \mv_j$, \eqref{m} implies
$$\lambda_i[x,y]=[Sx,y]=[x,Sy]=\lambda_j[x,y].$$

For each $i=1, \ldots, s$, define $\mz_i:=[\mv_i,\mv_i]$. From the above and the fact that $j$ is injective, we obtain
\begin{equation}\label{eq:center1}
\mz=[\mv,\mv]=\sum_{i=1}^s[\mv_i,\mv_i]= \sum_{i=1}^s\mz_i.
\end{equation}
Eq. \eqref{m} implies that $S|_{\mz_i}=\lambda_i{\rm Id}_{\mz_i}.$
Since the eigenspaces of $S$ are mutually orthogonal, the sum in \eqref{eq:center1} is an orthogonal direct sum. 

It is clear that $\mn_i:=\mv_i\oplus\mz_i$ is an ideal of $\mn$ for each $i=1, \ldots, s$ and also $\mn_i\bot \mn_j$ when $i\neq j$. Since $\mn$ is irreducible, we obtain that $s=1$, and thus $S=\lambda_1{\rm Id}_\mn$.
\end{proof}

\begin{pro} \label{pro:kb} On an irreducible $2$-step nilpotent metric Lie algebra, there is a one-to-one correspondence between non-zero Killing $2$-forms, up to multiplication with a positive real number, and orthogonal bi-invariant complex structures. \end{pro}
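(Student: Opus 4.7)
The plan is to construct inverse maps between positive-scaling classes of nonzero Killing $2$-forms and bi-invariant orthogonal complex structures, with each direction driven by the identities in Proposition \ref{pro:kill2}.

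For the easier direction, given a bi-invariant orthogonal complex structure $J$, I would first verify that $J$ preserves both $\mv$ and $\mz$: if $z \in \mz$ then $[Jx,z] = J[x,z] = 0$ for every $x \in \mn$, forcing $Jz \in \mz$, and $J\mv \subset \mv$ follows from skew-symmetry of $J$ (equivalent to orthogonality since $J^2 = -\mathrm{Id}$). Then I would compute $\langle j(Jz)x,y\rangle$ using the two equivalent forms $J[x,y] = [Jx,y]$ and $J[x,y] = [x,Jy]$, which yield respectively $j(Jz) = -j(z)J$ and $j(Jz) = Jj(z)$, hence the anti-commutation $Jj(z) + j(z)J = 0$. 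Setting $\alpha_2 := J|_\mv$ and $\alpha_0 := 3\,J|_\mz$, these two identities coincide with \eqref{eq:kill2}, so $\alpha := \alpha_2 + \alpha_0$ is a Killing $2$-form by Proposition \ref{pro:kill2}.

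For the converse, starting from a nonzero Killing $2$-form $\alpha = \alpha_2 + \alpha_0$, I would introduce the symmetric endomorphism $S$ of $\mn$ defined by $S|_\mv := -\alpha_2^2$ and $S|_\mz := -\tfrac{1}{9}\alpha_0^2$. The anti-commutation $\alpha_2 j(z) + j(z)\alpha_2 = 0$ in \eqref{eq:kill2} shows that $\alpha_2^2$ commutes with each $j(z)$, while iterating $j(\alpha_0 z) = 3\alpha_2 j(z)$ yields $j(\alpha_0^2 z) = 9\alpha_2^2 j(z)$. Combining these two facts gives $S[x,y] = [Sx,y]$ for $x,y \in \mv$, and the condition is trivial when a central argument is present. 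Lemma \ref{mm} then forces $S = \lambda\,\mathrm{Id}_\mn$. A quick check using the injectivity of $j$ shows $\alpha_2 \neq 0$ (otherwise $j(\alpha_0 z) = 0$ for every $z$ would force $\alpha = 0$), hence $\lambda > 0$; set $\mu := \sqrt{\lambda}$. The endomorphism $J$ given by $J|_\mv := \mu^{-1}\alpha_2$ and $J|_\mz := (3\mu)^{-1}\alpha_0$ is then skew and satisfies $J^2 = -\mathrm{Id}$, so it is an orthogonal almost complex structure, and bi-invariance follows immediately from the reformulation $\alpha_0[x,y] = 3[\alpha_2 x,y]$ recorded in \eqref{eq:alphaxy}.

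Composing the two constructions yields the identity on complex structures (since $\alpha_2 = J|_\mv$ forces $\mu = 1$) and multiplication by $\mu^{-1} > 0$ on Killing $2$-forms, so they descend to mutually inverse bijections between positive-scaling classes of nonzero Killing $2$-forms and orthogonal bi-invariant complex structures. The main obstacle, in my view, is pinpointing the asymmetric weight (the factor $3$ between the $\mv$- and $\mz$-parts) that aligns the Killing condition with the complex-structure condition; once this ansatz is guessed, the rest reduces to algebraic manipulations of \eqref{eq:kill2} together with the injectivity of $j$ and Lemma \ref{mm}.
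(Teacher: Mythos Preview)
Your proof is correct and follows essentially the same route as the paper's: build the symmetric endomorphism $S$ from $\alpha_2^2$ and $\tfrac19\alpha_0^2$, invoke Lemma~\ref{mm} to force $S$ to be scalar, then normalize to obtain $J$ (the paper uses the opposite sign convention for $S$, getting $\lambda<0$ instead of $\lambda>0$, but this is immaterial). One small slip: to show $J\mz\subset\mz$ you should argue $[Jz,x]=J[z,x]=0$ for all $x$, not $[Jx,z]=J[x,z]=0$, since the latter only reconfirms that $z$ is central.
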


\begin{proof}
Let $\alpha=\alpha_2+\alpha_0$ be a non-zero Killing $2$-form on $\mn$ as in Proposition \ref{pro:kill2}, where $\alpha_2\in \so(\mv)$ and $\alpha_0\in \so(\mz)$. Let $S$ be the endomorphism of $\mn$ which preserves $\mz$ and $\mv$ and verifies $S|_{\mv}=\alpha_2^2$, $S|_{\mz}=\frac19\alpha_0^2$. Since $\alpha_0$ and $\alpha_2$ verify \eqref{eq:alphaxy}, we get that $S$ is a symmetric endomorphism of $\mn$ which satisfies \eqref{m}. Lemma \ref{mm} implies that $S$ is a multiple of the identity $S=\lambda {\rm Id}_\mn$. If $\alpha$ is non-zero, $S$ is also non-zero, so necessarily $\lambda<0$. 

It is easy to check that the endomorphism $J\in \so(\mn)$ defined as
$$J|_{\mv}=\frac1{\sqrt{-\lambda}}\alpha_2, \quad \mbox{ and }\quad J|_{\mz}=\frac1{3\sqrt{-\lambda}}\alpha_0$$ is a bi-invariant complex structure on $\mn$.

For the converse, let $J$ be an orthogonal bi-invariant complex structure on $\mn$. The bi-invariance of $J$ implies that it preserves $\mv$ and $\mz$. Then the $2$-form $\alpha=\alpha_2+\alpha_0\in\mathfrak{so}(\mv)\oplus\mathfrak{so}(\mz)\subset \mathfrak{so}(\mn)$ defined by $\alpha_2:=J|_\mv$ and $\alpha_0:=3J|_\mz$ satisfies \eqref{eq:alphaxy} because of \eqref{j}, so $\alpha$ is a Killing $2$-form in view of Proposition \ref{pro:kill2}. 

Finally, it is clear that up to multiplication of $\alpha$ with a positive real number, the above constructions are inverse to each other.
\end{proof}


\begin{remark} \label{p1} Notice that the Killing 2-forms constructed above starting from bi-invariant orthogonal complex structures are never parallel. Indeed, given a bi-invariant orthogonal complex structure $J$ on $\mn$, and taking the scalar product in \eqref{j} with any $z\in\mz$, we get $j(z) J=-j(Jz)$ for every $z\in\mz$. Transposing this equality and using the skew-symmetry of $j(z)$, $j(Jz)$ and $J$, we obtain that $J$ anti-commutes with $j(z)$ for every $z\in\mz$. Therefore $\nabla_z\alpha=-\frac12[j(z),\alpha]=-\frac12[j(z),\alpha_2]=-\frac12[j(z),J|_{\mv}]=-j(z)J|_\mv$ cannot vanish for all $z\in\mz$.
\end{remark}

We are thus led to study bi-invariant orthogonal complex structures on irreducible $2$-step nilpotent metric Lie algebras. The next result shows that the situation is quite simple:

\begin{pro} \label{pro:un} An irreducible $2$-step nilpotent metric Lie algebra admits (up to sign) at most one bi-invariant orthogonal complex structure.
\end{pro}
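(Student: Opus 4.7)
Let $J_1$ and $J_2$ be two bi-invariant orthogonal complex structures on $\mn$. Each $J_i$ is skew-symmetric (being orthogonal with $J_i^2 = -\mathrm{Id}$), and the bi-invariance relation \eqref{j} for $J_1$ and $J_2$ immediately yields that the symmetric endomorphism $S := J_1 J_2 + J_2 J_1$ and the skew-symmetric endomorphism $B := J_1 J_2 - J_2 J_1$ of $\mn$ both satisfy the Leibniz-type identity $T[x,y]=[Tx,y]$ of Lemma \ref{mm}. Applying Lemma \ref{mm} directly to $S$ gives $S = \lambda\,\mathrm{Id}_\mn$ with $\lambda \in \R$; applying it to the symmetric positive semi-definite endomorphism $-B^2 = B^T B$ (which also satisfies the Leibniz identity) gives $-B^2 = \mu\,\mathrm{Id}_\mn$ with $\mu \geq 0$.

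The main step is to rule out $\mu > 0$. Each $J_i$ preserves $\mv$ and $\mz$ (being bi-invariant and orthogonal), so $B$ does too, and Remark \ref{p1} applied to each $J_i$ gives that $J_i|_\mv$ anti-commutes with every $j(z)$, $z \in \mz$. A one-line calculation then shows that $B|_\mv = [J_1|_\mv, J_2|_\mv]$ \emph{commutes} with every $j(z)$. On the other hand, if $\mu > 0$ then $\tilde J := B/\sqrt{\mu}$ is itself a bi-invariant orthogonal complex structure on $\mn$ (its orthogonality follows from $\tilde J^T \tilde J = -B^2/\mu = \mathrm{Id}$, the relation $\tilde J^2 = -\mathrm{Id}$ from $-B^2 = \mu\,\mathrm{Id}$, and bi-invariance from the Leibniz identity for $B$), and Remark \ref{p1} applied to $\tilde J$ forces $B|_\mv$ to \emph{anti-commute} with every $j(z)$. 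Together these two compatibilities imply $B|_\mv \circ j(z) = 0$ for all $z \in \mz$, so $\mathrm{Im}\, j(z) \subseteq \ker B|_\mv$ for every $z$; Lemma \ref{lm:imjzv} then yields $B|_\mv = 0$, contradicting $(B|_\mv)^2 = -\mu\,\mathrm{Id}_\mv$ with $\mu > 0$ and $\mv \neq 0$.

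Hence $\mu = 0$ and $J_1 J_2 = J_2 J_1$. Combined with $S = \lambda\,\mathrm{Id}$ this gives $J_1 J_2 = (\lambda/2)\,\mathrm{Id}$, and since $J_1 J_2$ is orthogonal we conclude $(\lambda/2)^2 = 1$, i.e. $\lambda = \pm 2$ and correspondingly $J_2 = \mp J_1$, which is the claim. The main obstacle is the middle paragraph: one has to see that if the commutator $B$ were non-zero, it would (after normalization) produce a \emph{third} bi-invariant orthogonal complex structure, whose $\mv$-part would then have to both commute (by its construction as a commutator of operators that each anti-commute with $j(z)$) and anti-commute with every $j(z)$; Lemma \ref{lm:imjzv} is what converts this algebraic clash into the actual vanishing of $B|_\mv$.
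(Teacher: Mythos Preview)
Your proof is correct and shares its overall architecture with the paper's: both introduce the anti-commutator $S=J_1J_2+J_2J_1$ and the commutator $B=J_1J_2-J_2J_1$, both observe (via Remark~\ref{p1}) that $B|_\mv$ commutes with every $j(z)$, and both finish by applying Lemma~\ref{mm} to the symmetric endomorphism $J_1J_2$ once $B=0$ is established.

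The genuine difference lies in how $B=0$ is obtained. The paper computes directly from bi-invariance that $j(z)B=-j(Bz)$ for all $z\in\mz$; since the left-hand side is symmetric (product of two commuting skew-symmetric maps) and the right-hand side is skew-symmetric, both vanish, and then injectivity of $j$ kills $B|_\mz$ while $\bigcap_z\ker j(z)=0$ kills $B|_\mv$. Your route is more indirect but reuses the existing machinery: you apply Lemma~\ref{mm} a second time, to $-B^2$, getting $-B^2=\mu\,\mathrm{Id}$, and then if $\mu>0$ you normalise $B$ to a \emph{third} bi-invariant orthogonal complex structure $\tilde J$, invoke Remark~\ref{p1} once more to force $B|_\mv$ to anti-commute with each $j(z)$, and derive the clash with the already-established commutation via Lemma~\ref{lm:imjzv}. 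The paper's argument is shorter and needs neither the second use of Lemma~\ref{mm} nor the normalisation step; your argument has the appeal of being entirely assembled from results already proved in the paper, without any new identity such as $j(z)B=-j(Bz)$.
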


\begin{proof}
Assume that $J$ is a bi-invariant orthogonal complex structure on $\mn$. The argument in the previous remark shows that $J$ anti-commutes with $j(z)$ for every $z\in\mz$. If $I$ is another bi-invariant orthogonal complex structure on $\mn$, it also has to anti-commute with $j(z)$ for every $z\in\mz$, whence the skew-symmetric endomorphism $IJ-JI$ commutes with $j(z)$ for every $z\in\mz$.

On the other hand, for every $x,y\in \mn$ we have  by \eqref{def:bii}:
$$[IJx,y]=I[Jx,y]=IJ[x,y],$$
which after taking the scalar product with any $z\in\mz$ reads $j(z)IJ=j(JIz)$, and similarly $j(z)JI=j(IJz)$. Subtracting these two equalities yields 
$$j(z)(IJ-JI)=-j((IJ-JI)z)\qquad \mbox{ for all } z \in \mz. $$
However, since $IJ-JI$ commutes with $j(z)$, the left hand term is symmetric, whereas the right hand term is skew-symmetric. Thus they both vanish for all $z\in\mz$. As $j$ is injective, this shows that $IJ-JI$ vanishes on $\mz$, and since the endomorphisms $j(z)$ have no common kernel in $\mv$, $IJ-JI$ also vanishes on $\mv$. Thus $I$ and $J$ commute, so $S:=IJ$ is symmetric. Moreover $S$ also satisfies \eqref{m}, so by Lemma \ref{mm}, $IJ$ is a multiple of the identity. As $(IJ)^2=\rm Id_\mn$, we necessarily have $I=\pm J$.
\end{proof}

In \cite{dBM} it is showed that each de Rham factor of $(N,g)$ is again a $2$-step nilpotent Lie group endowed with a left-invariant metric. Namely, the de Rham decomposition of $(N,g)$ corresponds to the decomposition of $(\mn,g)$ into irreducible orthogonal ideals:
\begin{pro}\cite[Corollary A.4]{dBM}\label{dR} Let $(\mn,g)$ be a $2$-step nilpotent metric Lie algebra. Then there exist {\em irreducible} $2$-step nilpotent metric Lie algebras $(\mn_i,g_i)$ $i\in\{1,\ldots,q\}$ (unique up to reordering) such that 
$$(\mn,g)=(\ma,g_0)\oplus \bigoplus_{i=1}^q(\mn_i,g_i),$$
for some abelian metric Lie algebra $(\ma,g_0)$.
\end{pro}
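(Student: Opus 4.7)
The plan is to prove existence by induction on $\dim\mn$ and uniqueness by invoking the de Rham decomposition theorem for the simply connected Riemannian manifold $(N,g)$.

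For existence, if $(\mn,g)$ is already irreducible in the sense of Definition \ref{irr}, then $\mn$ is either abelian (set $\ma:=\mn$, $q:=0$) or a genuine $2$-step nilpotent Lie algebra (set $\mn_1:=\mn$, $\ma:=0$, $q:=1$); in both cases the conclusion holds. Otherwise, Definition \ref{irr} furnishes a nontrivial orthogonal splitting $\mn=\mn'\oplus\mn''$ into ideals. Each summand inherits the condition $\ad_x^2=0$, hence is either abelian or $2$-step nilpotent, and has strictly smaller dimension than $\mn$. Applying the induction hypothesis to $\mn'$ and $\mn''$ and absorbing all abelian summands that appear into a single orthogonal direct summand $\ma$ yields the desired form.

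For uniqueness, the starting observation is that an orthogonal decomposition of $\mn$ into ideals gives rise to a Riemannian product decomposition of $(N,g)$. Indeed, the Koszul formula \eqref{eq.Koszul1} shows that $\nabla$ preserves each summand of such a splitting, so each corresponds to a totally geodesic normal subgroup, and since $N$ is simply connected it splits isometrically as a product. The abelian ideal $\ma$ is flat (as $\nabla$ vanishes on it by \eqref{eq:nabla}) and corresponds to a Euclidean de Rham factor; each irreducible non-abelian $\mn_i$ must have $j$ injective on its center (otherwise Remark \ref{an} would split off an abelian ideal, contradicting irreducibility), and using the standard Eberlein sectional curvature formula $K(x,z)=\tfrac14\|j(z)x\|^2$ for $x\in\mv_i$, $z\in\mz_i$ one sees that $N_i$ is non-flat and irreducible. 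The uniqueness of the $\mn_i$ up to reordering then follows from the uniqueness clause in de Rham's theorem for the simply connected manifold $N$.

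The main obstacle is the converse direction at the heart of uniqueness: given the de Rham decomposition of $(N,g)$, one must verify that each irreducible factor corresponds to an ideal of $\mn$, not merely to an arbitrary parallel orthogonal subspace. Testing the parallelism condition against $x\in\mv$ and $z\in\mz$ via \eqref{eq:nabla} forces such a subspace to be stable under all the operators $\tfrac12\ad_x$, $j(z)$ (and hence, by skew-symmetry, their adjoints); in the $2$-step nilpotent setting this is enough to conclude that the subspace is a Lie ideal. This is precisely the content of \cite[Corollary A.4]{dBM} cited just before the statement, and its verification closes the argument.
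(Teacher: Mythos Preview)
The paper does not supply its own proof of this proposition; it is quoted verbatim from \cite[Corollary A.4]{dBM}, so there is no in-paper argument to compare against.

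On its own merits, your proposal is sound. Existence by induction on $\dim\mn$ is correct (with the cosmetic remark that the induction should be stated for metric Lie algebras satisfying $\ad_x^2=0$, since the summands $\mn',\mn''$ arising in the inductive step may be abelian, whereas the hypothesis of the proposition already excludes $\mn$ itself being abelian). Your uniqueness strategy---matching the orthogonal-ideal decomposition with the de Rham decomposition of $(N,g)$---is exactly the right one, and you correctly identify that the whole weight rests on showing that every $\nabla$-parallel subspace $W\subset\mn$ is an ideal.

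Your sketch of this last step is slightly too optimistic, though. From \eqref{eq:nabla} one gets, for $w=w_\mv+w_\mz\in W$, that $j(z)w_\mv\in W$ for every $z\in\mz$, and that $[x,w_\mv]-j(w_\mz)x\in W$ for every $x\in\mv$; but the second condition mixes the bracket with a $j$-term, and concluding $[x,w_\mv]\in W$ (hence that $W$ is an ideal) still requires proving that $W$ is compatible with the splitting $\mv\oplus\mz$, i.e.\ $w_\mv,w_\mz\in W$ separately. That compatibility is not automatic from stability under $\nabla_x$ and $\nabla_z$ alone and is where the genuine work in \cite{dBM} lies. Since you explicitly flag this as the main obstacle and defer to the cited reference for its resolution, your write-up is honest about what remains to be checked.
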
 
We are now ready to state the main result of this section.

\begin{teo}\label{t2}
Let $(N,g)$ be a simply connected $2$-step nilpotent Lie group endowed with a left-invariant Riemannian metric. Then any invariant Killing $2$-form is the sum of left-invariant Killing $2$-forms on its de Rham factors. Moreover, the dimension of $\mathcal K^2(N,g)$, the space of left-invariant Killing $2$-forms on $(N,g)$, is
\begin{equation}
\dim \mathcal K^2(N,g)=\frac{d(d-1)}2+r, \label{teo2}
\end{equation}
where $d$ is the dimension of the Euclidean factor in the de Rham decomposition of $(N,g)$, and $r$ is the number of irreducible de Rham factors admitting bi-invariant orthogonal complex structures.
\end{teo}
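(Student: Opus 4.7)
The plan is to combine the reduction results from Propositions \ref{pro:sina}, \ref{pro:dec2}, \ref{pro:kb} and \ref{pro:un} with the de Rham decomposition from Proposition \ref{dR}. First I would apply Proposition \ref{pro:sina} to split any Killing $2$-form $\alpha$ on $\mn$ as $\alpha = \alpha_{\ma} + \alpha_{\mn_0}$, where $\alpha_{\ma}$ is an arbitrary $2$-form on $\ma = \ker j$ (automatically parallel, and hence Killing, because $\nabla$ vanishes whenever one argument lies in $\ma$) and $\alpha_{\mn_0}$ is a Killing $2$-form on $\mn_0 = \mv \oplus \ma^\perp$. Since $j_{\mn_0}$ is injective by construction, Proposition \ref{dR} applied to $\mn_0$ yields an orthogonal decomposition $\mn_0 = \bigoplus_{i=1}^{q} \mn_i$ into irreducible (necessarily non-abelian) $2$-step nilpotent metric ideals, with no further abelian summand. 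Iterating Proposition \ref{pro:dec2} along this decomposition, I split $\alpha_{\mn_0} = \sum_{i=1}^{q} \alpha_i$ with each $\alpha_i \in \mathcal{K}^2(\mn_i, g_i)$. Altogether $\alpha$ is a sum of left-invariant Killing $2$-forms on the de Rham factors of $(N,g)$---namely $\ma$ (the Euclidean factor) and the $\mn_i$---which proves the first assertion of the theorem.

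For the dimension count, the contribution of $\ma$ is $\dim \Lambda^2 \ma^* = \frac{d(d-1)}{2}$, since $d = \dim \ma$ is the dimension of the Euclidean factor and every $2$-form on $\ma$ is Killing. For each irreducible factor $\mn_i$, Proposition \ref{pro:kb} establishes a bijection between non-zero Killing $2$-forms up to positive scaling and orthogonal bi-invariant complex structures on $\mn_i$, and Proposition \ref{pro:un} asserts uniqueness of such a complex structure up to sign. Hence $\dim \mathcal{K}^2(\mn_i, g_i) \in \{0,1\}$, taking the value $1$ precisely when $\mn_i$ admits an orthogonal bi-invariant complex structure. Summing over the $q$ factors contributes exactly $r$, yielding $\dim \mathcal{K}^2(N,g) = \frac{d(d-1)}{2} + r$.

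The one point that requires care is the identification of the summand $\ma = \ker j$ (from Remark \ref{an}) with the full Euclidean factor of the de Rham decomposition of $(N,g)$, so that applying Proposition \ref{dR} to $\mn_0$ produces only non-abelian irreducible factors---the regime in which Propositions \ref{pro:kb} and \ref{pro:un} apply. This can be verified directly: any abelian direct ideal summand of $\mn_0$ must lie in the center $\ma^\perp$ of $\mn_0$, and would therefore lie in $\ker j_{\mn_0} = 0$ by construction; equivalently, it follows from the uniqueness part of Proposition \ref{dR}. Once this is secured, the remainder of the argument is a mechanical assembly of the cited propositions and no further calculation is required.
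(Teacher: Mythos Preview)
Your proof is correct and follows essentially the same route as the paper's: both combine Propositions \ref{pro:sina} and \ref{pro:dec2} with the de Rham decomposition of Proposition \ref{dR}, and then invoke Propositions \ref{pro:kb} and \ref{pro:un} for the dimension count on each irreducible factor. You are simply more explicit than the paper about why the abelian summand $\ma=\ker j$ coincides with the full Euclidean de Rham factor (so that $\mn_0$ has no abelian summand), a point the paper handles by a direct appeal to Proposition \ref{dR}.
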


\begin{proof}
Consider the de Rham decomposition of $(N,g)$ 
$$ (\R^d,g_0)\times (N_1,g_1)\times \ldots \times (N_q,g_q)$$
where $\R^d$ is the Euclidean factor, and let $(\mn,g)=(\ma,g_0)\oplus\bigoplus_{i=1}^q (\mn_i,g_i)$ be the corresponding decomposition of its Lie algebra  as in Proposition \ref{dR}.

Propositions \ref{pro:sina} and \ref{pro:dec2} imply that any Killing $2$-form on $\mn$ is the sum of Killing $2$-forms on each of the factors $\ma$ and $\mn_i$, $i=1, \ldots,q$, and conversely. Thus the space of invariant Killing $2$-forms on $(N,g)$ is the direct sum of the spaces of invariant Killing $2$-forms on $(\R^d,g_0)$ and $(N_i,g_i)$. 

Any left-invariant differential $2$-form on $\R^d$ is parallel, thus Killing, and according to Propositions \ref{pro:kb} and \ref{pro:un}, the space of left-invariant Killing $2$-forms on $(N_i,g_i)$ is one-dimensional if $(\mn_i,g_i)$ admits a bi-invariant orthogonal complex structure and zero otherwise.
\end{proof}

Note that by slightly changing the point of view, it might be interesting to ask the following question: {\em Which simply connected 2-step nilpotent Lie groups carry a left-invariant Riemannian metric admitting non-zero Killing $2$-forms?} Using Theorem \ref{t2} we can give a complete answer in low dimensions. To address this question, let us first introduce some terminology.

\begin{defi}\label{def:factor}
An ideal $\mh$ of a nilpotent Lie algebra $\mn$ is a factor of $\mn$ if there exist another ideal $\tilde \mh$ of $\mn$ such that $\mn=\mh\oplus \tilde \mh$.
\end{defi}

Note that in the decomposition of $\mn$ in Proposition \ref{dR}, each $\mn_i$ and $\ma$ are factors  of $\mn$.

\begin{cor}\label{fac2}
A simply connected 2-step nilpotent Lie group $N$, with corresponding Lie algebra $\mn$,  admits a left-invariant  Riemannian metric $g$ such that $(N,g)$ carries non-zero Killing $2$-forms if and only if one of the following (non-exclusive) conditions holds:
\begin{enumerate}
\item $\mn$ has an abelian $2$-dimensional factor,
\item $\mn$ has a non-abelian factor admitting a bi-invariant complex structure.
\end{enumerate}
\end{cor}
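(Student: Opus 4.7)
My strategy is to apply Theorem \ref{t2}, which asserts that $(N,g)$ carries a non-zero left-invariant Killing 2-form if and only if $d\ge 2$ or $r\ge 1$, where $d$ and $r$ refer to the de Rham decomposition of $(\mn,g)$. Hence I need to decide for which Lie algebras $\mn$ at least one of these two inequalities can be achieved by some choice of metric.

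For the forward direction, I would fix a metric $g$ for which $(N,g)$ carries a non-zero Killing 2-form. If $d\ge 2$, the Euclidean factor $\ma\subset\mn$ is an abelian ideal of dimension at least two; any $2$-dimensional subspace $\mh\subset\ma$ is then a central abelian ideal, and together with the orthogonal complement of $\mh$ inside $\ma$ plus the non-abelian de Rham factors it gives a direct sum $\mn=\mh\oplus\tilde\mh$ into ideals, so condition (1) holds. If instead $r\ge 1$, one of the irreducible non-abelian de Rham factors $\mn_i$ admits a bi-invariant orthogonal complex structure, hence in particular a bi-invariant complex structure; by Proposition \ref{dR} the ideal $\mn_i$ is a factor of $\mn$ in the sense of Definition \ref{def:factor}, giving condition (2).

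For the backward direction, assume (1): $\mn=\mh\oplus\tilde\mh$ with $\mh$ abelian of dimension 2. I would equip $\mn$ with any inner product making $\mh$ orthogonal to $\tilde\mh$. Then $\mh$ lies in the center of $\mn$, is orthogonal to $\mn'=\tilde\mh'$, and therefore lies in the kernel $\ma$ of $j$; hence $d\ge 2$ and Theorem \ref{t2} produces non-zero Killing $2$-forms. Now assume (2): $\mn=\mh\oplus\tilde\mh$ with $\mh$ non-abelian admitting a bi-invariant complex structure $J$. I would choose a $J$-Hermitian inner product on $\mh$ and any inner product on $\tilde\mh$, and extend them by orthogonality to $\mn$. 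Since the de Rham decomposition respects orthogonal direct sums of ideals, it suffices to exhibit one non-abelian irreducible de Rham factor of $\mh$ that admits a bi-invariant orthogonal complex structure.

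The main obstacle is here, since a priori $J$ could permute the irreducible de Rham factors of $\mh$ non-trivially. The key observation is that if $J$ sends an irreducible factor $\mh_i$ into a distinct factor $\mh_j$, then for any $x,y\in\mh_i$ one has $J[x,y]=[Jx,y]\in[\mh_j,\mh_i]=0$; since $J$ is injective this forces $[\mh_i,\mh_i]=0$, i.e.\ $\mh_i$ is abelian. Therefore $J$ preserves every non-abelian irreducible de Rham factor of $\mh$, and since $\mh$ itself is non-abelian at least one such factor exists. The restriction of $J$ to it is a bi-invariant orthogonal complex structure, so $r\ge 1$ for $(\mn,g)$ and the proof concludes via Theorem \ref{t2}.
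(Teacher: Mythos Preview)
Your forward direction and case (1) of the backward direction match the paper. The difference lies in case (2) of the backward direction.

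The paper does not decompose $\mh$ at all. Once $J$ is made orthogonal via a Hermitian metric on $\mh$, one sets $\alpha_2:=J|_{\mv_\mh}$ and $\alpha_0:=3J|_{\mz_\mh}$; the bi-invariance relation \eqref{j} yields \eqref{eq:kill2} directly, so $\alpha=\alpha_2+\alpha_0$ is a non-zero Killing $2$-form on $\mh$ by Proposition~\ref{pro:kill2} (whose hypotheses do not involve irreducibility). Extending by zero to $\mn$ gives the required non-zero Killing $2$-form on $(N,g)$. This is what the paper means by ``we get that $r\ge 1$''.

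Your route --- restricting $J$ to a non-abelian irreducible de Rham factor of $\mh$ --- is viable in principle, but the argument as written has a gap. You handle only the situation ``$J(\mh_i)\subset\mh_j$ for a single $j\ne i$'', tacitly assuming that $J$ permutes the irreducible factors. Since $J$ is orthogonal but not a Lie-algebra automorphism, nothing prevents $J(\mh_i)$ from meeting several factors (and the abelian part) simultaneously, so the dichotomy ``$J$ preserves $\mh_i$'' versus ``$J$ sends $\mh_i$ into some other $\mh_j$'' is not exhaustive. The repair is short: for $x,y\in\mh_i$ only the $\mh_i$-component of $Jx$ contributes to $[Jx,y]$, so your computation already gives $J(\mz_i)=J[\mh_i,\mh_i]\subset\mh_i\cap\mz_\mh=\mz_i$; then the identity $Jj(z)=j(Jz)=-j(z)J$ (from \eqref{j}) for $z\in\mz_i$, combined with Lemma~\ref{lm:imjzv}, forces $J(\mv_i)\subset\mv_i$. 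With this addition your approach is complete, though the paper's direct construction via Proposition~\ref{pro:kill2} is shorter.
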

\begin{proof}
If there exists a left-invariant Riemannian metric $g$ such that $\dim \mathcal K^2(N,g)\geq 1$ then, according to \eqref{teo2} we have that either $d\geq 2$, i.e., the Euclidean factor of $(N,g)$ has dimension at least $2$ or $r\geq 1$, that is, $\mn$ has a factor $\mn_i$ which carries a bi-invariant complex structure.

Conversely, if $\mn$ has a $2$-dimensional abelian factor so that $\mn=\R^2\oplus \mh$ we define an inner product of $\mn$ adding the standard inner product of $\R^2$ and an arbitrary inner product on the ideal $\mh$. The corresponding left-invariant Riemannian metric on $N$ has an Euclidean factor of dimension at least $2$, thus $\dim \mathcal K^2(N,g)\geq 1$. Finally, suppose that $\mn$ is a direct sum of ideals $\mn=\mh\oplus\tilde\mh$ and $\mh$ admits a bi-invariant complex structure $J$. Consider any inner product $h$ on $\mh$, then $J$ is orthogonal with respect to the inner product $h(\cdot,\cdot)+h(J\cdot, J\cdot)$. Extending this to an inner product $g$ on $\mn$ by adding any inner product on $\tilde \mh$, we get that $r\geq 1$ in \eqref{teo2} for $(N,g)$.
\end{proof}

A real Lie algebra of dimension $2p$ carrying a bi-invariant complex structure can be seen as a complex Lie algebra of dimension $p$. Up to complex dimension $4$, there are only two non-abelian 2-step nilpotent complex Lie algebras: the complex Heisenberg Lie algebra $\mh_3^\C$ in dimension $3$ and $\C\oplus\mh_3^\C$ in dimension 4. From Corollary \ref{fac2} and Remark \ref{class}, we obtain the following classification result.

\begin{teo} There exist exactly $14$ isomorphism classes of (non-abelian) $2$-step nilpotent Lie algebras of dimension $p\leq 8$ admitting an inner product for which the corresponding simply connected Riemannian Lie group carries non-zero Killing $2$-forms:
\begin{itemize}
\item $p=5$: $\R^2\oplus \mh_3$;
\item $p=6$: $\R^3\oplus \mh_3$ and $\mh_3^\C$;
\item $p=7$:  $\R\oplus \mh_3^\C$ and $\R^2\oplus \mh$, where $\mh\in \mathcal N_5$;
\item $p=8$: $\R^2\oplus \mh$, where $\mh\in \mathcal N_6$.
\end{itemize}
\end{teo}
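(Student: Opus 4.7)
The plan is to invoke Corollary \ref{fac2} in each dimension $p\in\{3,\dots,8\}$, combining the classification of real $2$-step nilpotent Lie algebras up to dimension $6$ from Remark \ref{class} with the classification of complex $2$-step nilpotent Lie algebras up to complex dimension $4$ stated immediately before the theorem.

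A non-abelian $2$-step nilpotent Lie algebra $\mn$ of dimension $p$ qualifies if and only if either \textbf{(1)} $\mn=\R^2\oplus\mh$ for some ideal $\mh\in\mathcal N_{p-2}$, or \textbf{(2)} $\mn$ has a non-abelian factor carrying a bi-invariant complex structure. The number of type (1) candidates equals ${\rm card}(\mathcal N_{p-2})$, which is $0,0,1,1,3,7$ for $p=3,\dots,8$ respectively. For type (2) the non-abelian factor has even real dimension $2k\le 8$; by the classification recalled before the theorem, the only options are $\mh_3^\C$ ($2k=6$) and $\C\oplus\mh_3^\C$ ($2k=8$). This yields three type (2) candidates: $\mh_3^\C$ at $p=6$, $\R\oplus\mh_3^\C$ at $p=7$, and $\R^2\oplus\mh_3^\C\cong\C\oplus\mh_3^\C$ at $p=8$.

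The next step is to merge the two lists without double counting. At $p=8$, the type (2) entry $\R^2\oplus\mh_3^\C$ already appears among the type (1) entries, since $\mh_3^\C\in\mathcal N_6$. At $p=7$, no type (1) entry $\R^2\oplus\mh$ with $\mh\in\mathcal N_5$ can coincide with $\R\oplus\mh_3^\C$, because such $\mh$ has odd real dimension and therefore admits no complex structure. It then remains to check that the type (2) entries at $p=6$ and $p=7$ are genuinely new, i.e., do not admit a $2$-dimensional abelian factor. For $\mh_3^\C$ I would argue: if $\mh_3^\C=\R^2\oplus\mh$ is a direct sum of ideals, then $[\R^2,\mh]\subseteq\R^2\cap\mh=0$, so $\R^2\subseteq\mathfrak z(\mh_3^\C)$; but $\dim\mathfrak z(\mh_3^\C)=2$ and $\mathfrak z(\mh_3^\C)=[\mh_3^\C,\mh_3^\C]=[\mh,\mh]\subseteq\mh$, contradicting $\R^2\cap\mh=0$. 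An analogous argument, comparing dimensions of the center and the derived subalgebra, rules out any decomposition $\R\oplus\mh_3^\C=\R^2\oplus\mh'$ with $\mh'\in\mathcal N_5$. Summing the distinct classes then gives $1+2+(3+1)+7=14$, matching the five bullets in the stated list.

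The only non-routine step in this plan is the indecomposability analysis above: confirming that $\mh_3^\C$ and $\R\oplus\mh_3^\C$ do not admit a $2$-dimensional abelian factor. Once this is secured, the remainder is straightforward bookkeeping dictated by Corollary \ref{fac2} together with the low-dimensional classifications of real and complex $2$-step nilpotent Lie algebras.
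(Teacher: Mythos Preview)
Your approach is exactly the paper's: apply Corollary \ref{fac2} together with Remark \ref{class} and the short classification of complex $2$-step nilpotent Lie algebras up to complex dimension $4$, then do the bookkeeping. The paper in fact gives no further details beyond this, so your write-up is more explicit than the original.

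One sentence is wrong, though it does no damage because you later give the correct argument. The claim that at $p=7$ no $\R^2\oplus\mh$ with $\mh\in\mathcal N_5$ can coincide with $\R\oplus\mh_3^\C$ ``because $\mh$ has odd real dimension and therefore admits no complex structure'' is not a valid deduction: nothing forces such an $\mh$ to carry a complex structure. What you actually need is precisely what you prove two sentences later via the center/derived-algebra dimension count, so that earlier sentence should simply be deleted. (Concretely: if $\R\oplus\mh_3^\C=\R^2\oplus\mh'$ then $\R^2$ lies in the $3$-dimensional center and meets the $2$-dimensional commutator $[\mh',\mh']\subset\mh'$ trivially, giving $2+2\le 3$, a contradiction.) You should also be aware that the counting of type (1) classes silently uses that $\R^2\oplus\mh_1\cong\R^2\oplus\mh_2$ forces $\mh_1\cong\mh_2$; the paper takes this for granted as well, and in these low dimensions it can be verified by comparing the obvious numerical invariants.
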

In the above list $\mh_3^\C$ represents the real Lie underlying the complex Heisenberg Lie algebra.

The next example shows that on a given nilpotent Lie algebra there might exist several non-equivalent inner products for which the corresponding Riemannian Lie group carries non-zero Killing 2-forms.

\begin{ex} \label{ex1} The real Lie algebra underlying $\mh_3^\C$ admits a basis $\{e_1,e_2,e_3,e_4,z_1,z_2\}$
satisfying the bracket relations
\begin{equation*}
[e_1,e_3]= z_1=-[e_2,e_4] \quad \mbox{ and } \quad [e_2,e_3]= z_2=[e_1,e_4].
\end{equation*}

Consider the one parameter family of metrics $g_\lambda$, with $\lambda>0$, for which the basis $\{e_1,e_2,e_3,e_4,z_1/\lambda,z_2/\lambda\}$ is orthonormal.
The complex structure defined by $Je_1=e_2$, $Je_3=e_4$ and $Jz_1=z_2$ is bi-invariant and orthonormal with respect to each $g_\lambda$. Thus, $(N,g_\lambda)$ admits non-zero Killing 2-forms for every $\lambda>0$ by Theorem \ref{t2}.

The center $\mz$  of $\mn$ is spanned by $z_1,z_2$, being its orthogonal $\mv={\rm span}\{e_1, e_2,e_3, e_4\}$. The map $j_\lambda:\mz\lra \so(\mv)$ associated to the metric $g_\lambda$ verifies ${\rm tr}\,(j_\lambda(z)^2)=-4\lambda^2\,g_\lambda( z,z)$ for every $z\in \mz$. 

Wilson in  \cite{Wi82} proved that two simply connected Riemannian Lie groups endowed with left-invariant metrics are isometric Riemannian manifolds if and only if their Lie algebras are isometrically isomorphic. Suppose that there exists an isometric  isomorphism $f:(\mn,g_\lambda)\lra (\mn,g_\mu)$. Then $f$ preserves $\mz$, because it is an isomorphism, and it also preserves $\mv$, since it is an isometry. Moreover, it is easy to check that
\begin{equation}
j_\mu(f(z))=f j_\lambda(z) f^{-1},\quad \mbox{for all }z\in \mz.
\end{equation}
This implies, for every $z\in\mz$, 
$$4\mu^2\,g_\lambda( z,z)=4\mu^2\,g_\mu( f(z),f(z)) =-{\rm tr}\,( j_\mu(f(z))^2)=-{\rm tr}\, (j_\lambda(z)^2)=4\lambda^2\,g_\lambda( z,z)$$ so $\lambda=\mu$. 
\end{ex}

\begin{remark}
The correspondence between left-invariant Killing 2-forms and bi-invari\-ant orthogonal complex structures in the irreducible case (Proposition \ref{pro:kb}) was obtained in a slightly different form by A. Andrada and I. Dotti (cf. \cite{AD19}, Remark 3.8). They also showed that the space of Killing 2-forms on a 2-step nilpotent metric Lie algebra $(\mn,g)$ is at most one-dimensional if $\mn$ is a Lie algebra associated to a graph (\cite{AD19}, Theorem 4.1).

The difference between our approach and the one in \cite{AD19} is basically due to the reduction procedure. Our strategy is to use the de Rham decomposition of the Riemannian Lie group in order to reduce the problem to the irreducible case, whereas in \cite{AD19}, the authors consider the spectral decomposition associated to the  Killing 2-form, which in general contains less information than the de Rham decomposition. This subtle but essential difference allowed us to prove Theorem \ref{t2}, generalizing the results in \cite{AD19}.



\end{remark}

\section{Invariant Killing $3$-forms on $2$-step nilpotent Lie groups}

In this section we prove that non-zero left-invariant Killing $3$-forms on irreducible $2$-step nilpotent metric Lie algebras only exist when the corresponding Lie group is naturally reductive as homogeneous space under its isometry group and, in this case, the space of such Killing forms is one-dimensional. As in the case of Killing $2$-forms, we obtain a description of the space of left-invariant Killing $3$-forms on Riemannian Lie groups using their de Rham decomposition.\medskip

We maintain the notation $(N,g)$ for a $2$-step nilpotent Lie group endowed with a left-invariant metric and $\mn$ the corresponding Lie algebra which we write as orthogonal direct sum $\mn=\mv\oplus\mz$. Consider $\alpha\in \Lambda^3\mn^*$ and decompose it like in \eqref{eq:lambdad} as
$ \alpha=\alpha_3+\alpha_2+\alpha_1+\alpha_0$, where $\alpha_l$ is the projection of $\alpha$ on $\Lambda^l\mv^*\otimes \Lambda^{3-l}\mz^*$. 

In Corollary \ref{cor:killgen} we have seen that the Killing condition for a $k$-form $\alpha$ imposes restrictions on its components $\alpha_l$, $l=0,\ldots, k$. The next two results show that when $k=3$, these restrictions have a simpler interpretation.

\begin{pro} 
If $\alpha $ is a Killing $3$-form then $\alpha_{1}=\alpha_{3}=0$.
\end{pro}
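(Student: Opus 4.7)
The plan is to identify which equations of Corollary \ref{cor:killgen} involve the components $\alpha_1$ and $\alpha_3$, and to show that these two components vanish essentially independently, exploiting the fact that the system of Corollary \ref{cor:killgen} decouples according to the parity of the $\mv$-degree (because \eqref{eq:pp1} and \eqref{eq:pp3} each relate $\alpha_l$ and $\alpha_{l\pm 2}$, while \eqref{eq:pp2} involves $\alpha_l$ alone). I would first show $\alpha_1=0$, then deduce $\alpha_3=0$ from it.

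For $\alpha_1$, the relevant equations are \eqref{eq:pp2} with $l=1$ and \eqref{eq:pp3} with $l=0$, which together form a closed system on $\alpha_1$. Indeed, in \eqref{eq:pp3} for $l=0$, the last term involves $\alpha_{-1}=0$, and the term $\sum_i j(z)e_i\wedge (x\lrcorner e_i\lrcorner\alpha_1)$ vanishes because any contraction of a form in $\Lambda^2\mz^*$ by $x\in\mv$ is zero. Unwinding \eqref{eq:pp2} (with $l=1$) by evaluating on $(v,z')\in\mv\times\mz$ and polarizing in $z$, one obtains $\alpha_1(j(z_1)v,z_2,z')+\alpha_1(j(z_2)v,z_1,z')=0$. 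Evaluating \eqref{eq:pp3} (with $l=0$) on $(z',z'')\in\mz\times\mz$ gives $\alpha_1(j(z')x,z,z'')-\alpha_1(j(z'')x,z,z')=2\alpha_1(j(z)x,z',z'')$, and substituting the polarized identity into the left-hand side collapses this to $4\alpha_1(j(z)x,z',z'')=0$. Lemma \ref{lm:imjzv}, which gives $\mv=\sum_{z\in\mz}\Im j(z)$, then forces $\alpha_1=0$.

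Knowing $\alpha_1=0$, I would turn to $\alpha_3$ using \eqref{eq:pp1} with $l=1$ and \eqref{eq:pp3} with $l=2$. The right-hand side of the former is now zero. In the latter, the last term vanishes (since $\alpha_1=0$), and the left-hand side vanishes as well because $z\lrcorner e_i\lrcorner\alpha_3=0$ for $\alpha_3\in\Lambda^3\mv^*$. Evaluating \eqref{eq:pp1} on $(v,z')$ and polarizing gives the key identity $\alpha_3(j(z)x,y,v)=\alpha_3(x,j(z)y,v)$. Evaluating \eqref{eq:pp3} on $(u,w)\in\mv\times\mv$, a short computation using the definition of the wedge product turns it into $2\alpha_3(j(z)x,u,w)-\alpha_3(j(z)u,x,w)+\alpha_3(j(z)w,x,u)=0$; substituting the polarized identity into each of the last two terms collapses this to $4\alpha_3(j(z)x,u,w)=0$, and Lemma \ref{lm:imjzv} once again yields $\alpha_3=0$.

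The whole argument is essentially organizational rather than conceptual. The main point to pay attention to is the systematic cancellation of several terms due to the bigrading constraints $x\lrcorner\beta=0$ for $x\in\mv$ and $\beta\in\Lambda^{\geq 1}\mz^*$ (and symmetrically), which is what ensures that the equations selected above close on $\alpha_1$ and on $\alpha_3$, respectively.
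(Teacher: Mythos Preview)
Your proof is correct. Both you and the paper argue by first showing $\alpha_1=0$ and then $\alpha_3=0$, using Corollary~\ref{cor:killgen} and Lemma~\ref{lm:imjzv}, but the precise combinations of equations differ.

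For $\alpha_1$, the paper uses \emph{only} \eqref{eq:pp3} with $l=0$: setting $A(z,z',z''):=\lela \alpha_1(j(z)x)z',z''\rira$, the equation becomes $A(z',z,z'')+A(z'',z',z)=2A(z,z',z'')$, and a short symmetrization (swap $z\leftrightarrow z'$ and add) shows $A$ is alternating in its first two arguments; being already alternating in the last two, it is totally alternating, and the original relation then forces $A=0$. You instead bring in \eqref{eq:pp2} with $l=1$, which after polarization supplies the identity $\alpha_1(j(z_1)v,z_2,z')+\alpha_1(j(z_2)v,z_1,z')=0$ directly, and substitute it into \eqref{eq:pp3}. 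For $\alpha_3$, the paper again uses only \eqref{eq:pp3} with $l=2$: rewriting it as $(j(z)x)\lrcorner\alpha_3=x\lrcorner j(z)(\alpha_3)$ and applying $\sum_i e_i\wedge$ with $x=e_i$ gives $-j(z)(\alpha_3)=3\,j(z)(\alpha_3)$, hence $j(z)(\alpha_3)=0$ and so $(j(z)x)\lrcorner\alpha_3=0$. You instead combine \eqref{eq:pp3} with $l=2$ and the polarized identity $\alpha_3(j(z)x,y,v)=\alpha_3(x,j(z)y,v)$ coming from \eqref{eq:pp1} with $l=1$ (using $\alpha_1=0$).

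Your approach is more hands-on: two equations are evaluated on explicit arguments and one is substituted into the other, with no need for the derivation formalism of \eqref{der} or Bianchi-type symmetrizations. The paper's approach is more economical (one equation per component) and highlights the intrinsic symmetries, at the cost of slightly slicker manipulations. Either way the content is the same, and Lemma~\ref{lm:imjzv} closes the argument in both cases.
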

\begin{proof} Let $\{e_1, \ldots, e_n\}$ and $\{z_1, \ldots, z_m\}$ be orthonormal bases of $\mv$ and $\mz$, respectively. If $\alpha$  is a Killing form, then \eqref{eq:pp3} for $l=0$ gives:
$$ \sum_{i=1}^n [x,e_i]\wedge (z\lrcorner \ e_i\lrcorner\ \alpha_{1})=2(j(z)x)\lrcorner \alpha_{1},  \quad \mbox{ for all } x\in \mv, z\in \mz.$$
By viewing $\alpha_{1}$ as a linear map $\alpha_1:\mv\lra \so(\mz)$, this is an equality between $2$-forms on $\mz$ and the left hand side is $\sum_{i=1}^n [x,e_i]\wedge (z\lrcorner  \alpha_1(e_i))$. Evaluating both sides on $z',z''\in \mz$ we obtain
\begin{eqnarray}\label{eq:A}
 \lela \alpha_1(j(z')x)z,z''\rira- \lela\alpha_1( j(z'')x)z,z'\rira&=&2 \lela\alpha_1( j(z)x)z',z''\rira.
\end{eqnarray}
 
Let us denote by $A(z,z',z''):=\lela \alpha_1(j(z)x)z',z''\rira$. Then $A(z,z'',z')=-A(z,z',z'')$ and \eqref{eq:A} can be written as
$ A(z',z,z'')+ A(z'',z',z)=2A(z,z',z'')$. Interchanging $z$ with $z'$ we also have $ A(z,z',z'')+ A(z'',z,z')=2A(z,'z,z'')$. Adding these two equations we get $A(z,z',z'')=-A(z',z,z'')$ which together with \eqref{eq:A} implies $A=0$. Therefore $(j(z)x)\lrcorner \alpha_{1}=0$ for all $z\in\mz$ and $x\in \mv$. Using Lemma \ref{lm:imjzv}, we obtain $\alpha_{1}=0$.

To prove that $\alpha_3$ vanishes we make use of \eqref{eq:pp3} for $l=2$. This gives, for all $x\in \mv$ and $z\in \mz$,
\begin{eqnarray*}
2 (j(z)x)\lrcorner \alpha_{3}&=&- \sum_{i=1}^n j(z)e_i\wedge (x\lrcorner \ e_i\lrcorner\ \alpha_{3})\\
 &=& \sum_{i=1}^n x\lrcorner \ (j(z)e_i\wedge e_i\lrcorner\ \alpha_{3})- \sum_{i=1}^n \lela x,j(z)e_i\rira e_i\lrcorner \ \alpha_{3}\\
 &=& x\lrcorner \ j(z)( \alpha_{3})+(j(z)x)\lrcorner\ \alpha_{3}.
\end{eqnarray*} 

Hence $(j(z)x)\lrcorner \alpha_{3}= x\lrcorner j(z)( \alpha_{3})$ for all $x\in \mv$ and $z\in \mz$, where we denoted by $ j(z)( \alpha_{3})$ the action of the skew-symmetric endomorphism $j(z)$ of $\mv$ on $\alpha_3$, when extended as a derivation of the exterior algebra of $\mv^*$ like in \eqref{der}. By taking $x=e_i$ in this formula, making the wedge product with $e_i$ and summing over $i$, we get
$$\sum_{i=1}^n e_i\wedge (j(z)e_i)\lrcorner \alpha_{3}= \sum_{i=1}^n e_i \wedge e_i\lrcorner  j(z)( \alpha_{3}).$$ The right hand term is equal to $3 j(z)(\alpha_{3})$, whereas the left hand term is equal to $-j(z)(\alpha_{3})$. Therefore we must have $(j(z)x)\lrcorner \alpha_{3}=0$ for all $x\in \mv$ and $z\in \mz$, which implies $\alpha_{3}=0$ in view of Lemma \ref{lm:imjzv}.
\end{proof}

We conclude that any Killing $3$-form $\alpha\in \Lambda^3\mn^*$ has only two non-zero components, namely, $\alpha_2$ and $\alpha_0$. To simplify the notation, we denote from now on these components as $\beta$ and $\gamma$, respectively. Thus any Killing $3$-form on $\mn$ is of the form
\begin{equation}
\alpha=\beta  +\gamma, \quad \mbox{ where }\beta\in \Lambda^2\mv^*\otimes \mz^* \mbox{ and } \gamma\in \Lambda^3\mz^*.\label{eq:f6}
\end{equation} 

By using the metric, we see $\beta$ as a linear map $\beta:\mz\lra \so(\mv)$ and $\gamma$ as a skew-symmetric bilinear map $\gamma:\mz\times \mz\lra \mz$, such that for each $z\in \mz$, $\gamma(z,\cdot)\in \so(\mv)$.

\begin{pro}\label{teo:kill} Let  $\alpha=\beta +\gamma$ be a $3$-form  as in \eqref{eq:f6} and let $\{z_1 ,\ldots, z_m\}$ be an orthonormal basis of $\mz$. Then $\alpha$ is a Killing $3$-form if and only if for all $z,z'\in \mz$ and $x,y\in \mv$ the following conditions hold:
\begin{equation}\label{gamma1} 
\beta(z)j(z')- \beta(z')j(z)+[j(z),\beta(z')]= j(\gamma(z,z'))
\end{equation}
\begin{equation}\label{gamma2} 
\sum_{t=1}^m j(z_t)x\wedge \beta(z_t) x=0.
\end{equation}
\end{pro}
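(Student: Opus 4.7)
The approach will be to specialise Corollary \ref{cor:killgen} to the form $\alpha = \beta + \gamma$ (which has $\alpha_1 = \alpha_3 = 0$ by the preceding proposition) and to work through the nine resulting conditions---three values of $l$ in each of \eqref{eq:pp1}--\eqref{eq:pp3}. The bulk of them will vanish for type reasons: $\gamma \in \Lambda^3\mz^*$ kills any contraction by an element of $\mv$, and $\beta \in \Lambda^2\mv^* \otimes \mz^*$ vanishes under double contraction by two central elements. Combined with $\alpha_1 = \alpha_3 = 0$, this makes \eqref{eq:pp1} at $l = 1$, \eqref{eq:pp2} at $l = 0, 1$, and \eqref{eq:pp3} at $l = 0, 2$ tautologous, and kills the right hand side of \eqref{eq:pp1} at $l = 0$. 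Only four genuine conditions remain.

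These four conditions will be translated into endomorphism identities using the metric dictionary $(x \lrcorner z \lrcorner \beta)(y) = g(\beta(z)x, y)$ for $y \in \mv$, $(z \lrcorner z_t \lrcorner \gamma)(z') = g(\gamma(z_t, z), z')$, the completeness relation $\sum_t g(\gamma(z_t, z), z') z_t = \gamma(z, z')$, and $g([x, y], z) = g(j(z)x, y)$. A direct calculation then shows that \eqref{eq:pp3} at $l = 1$ reads $\beta(z)j(z') + j(z)\beta(z') - 2\beta(z')j(z) = j(\gamma(z, z'))$, which is exactly \eqref{gamma1} once $[j(z), \beta(z')]$ is expanded; \eqref{eq:pp1} at $l = 2$ reads $\sum_t j(z_t)x \wedge \beta(z_t)x = 0$, which is \eqref{gamma2}; \eqref{eq:pp1} at $l = 0$ reads $g(\beta(z)j(z')x, x) = g(\beta(z')j(z)x, x)$; and \eqref{eq:pp2} at $l = 2$ reads $[j(z), \beta(z)] = 0$, using the elementary identity $(\sum_i A e_i \wedge B e_i)(u, v) = g([A, B]u, v)$ for $A, B \in \so(\mv)$ in an orthonormal basis.

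The last two of these four conditions will be shown to follow from \eqref{gamma1}: pairing \eqref{gamma1} applied to $x$ with $x$ yields the third condition, since both $[j(z), \beta(z')]$ and $j(\gamma(z, z'))$ are skew-symmetric and hence satisfy $g(Cx, x) = 0$; and setting $z' = z$ in \eqref{gamma1} yields $[j(z), \beta(z)] = 0$ using $\gamma(z, z) = 0$. Thus \eqref{gamma1} together with \eqref{gamma2} is equivalent to the full system arising from Corollary \ref{cor:killgen}. The main obstacle is the bookkeeping required to reduce the nine cases of the corollary to the correct endomorphism identities on $\mv$; once the translation dictionary is in place, the equivalence is a purely algebraic verification.
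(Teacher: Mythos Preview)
Your proposal is correct and follows essentially the same route as the paper: specialise Corollary~\ref{cor:killgen} to $\alpha=\beta+\gamma$, discard the five conditions that vanish by type ($\alpha_1=\alpha_3=0$, $e_i\lrcorner\gamma=0$, $x\lrcorner z_t\lrcorner\gamma=0$), translate the four surviving ones into endomorphism identities on $\mv$, and observe that \eqref{eq:pp1}$_{l=0}$ and \eqref{eq:pp2}$_{l=2}$ are consequences of \eqref{gamma1} (via skew-symmetry of $j(\gamma(z,z'))-[j(z),\beta(z')]$ and the specialisation $z=z'$, respectively). The only cosmetic difference is that the paper carries out the forward and backward implications separately and computes the pairing of each term in \eqref{eq:pp3}$_{l=1}$ with $(x',z')$ explicitly, whereas you package the same computations into a dictionary and the commutator identity $\sum_i Ae_i\wedge Be_i\leftrightarrow [A,B]$.
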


\begin{proof} Assume that $\alpha=\beta+\gamma$ as in \eqref{eq:f6} is a Killing $3$-form.
Let $\{e_1,\ldots, e_n\}$ and  $\{z_1, \ldots, z_m\}$ be orthonormal bases of $\mv$ and $\mz$, respectively. Then  \eqref{eq:pp3} for $l=1$ implies that for any $x\in \mv$ and $z\in \mz$ one has
$$
\sum_{i=1}^n [x,e_i]\wedge (z\lrcorner \ e_i\lrcorner\ \beta)-2(j(z)x)\lrcorner \beta - \sum_{i=1}^n j(z)e_i\wedge (x\lrcorner \ e_i\lrcorner\ \beta)=
\sum_{t=1}^mj(z_t)x\wedge (z\lrcorner \ z_t\lrcorner\ \gamma).
$$

We evaluate each term of this equality on  $x'\in \mv$ and $z'\in \mz$. Straightforward computations give
\begin{eqnarray*}
\left\{(j(z)x)\lrcorner \beta\right\}(x',z')  &=& \lela \beta(z')j(z)x,x'\rira\\
 \sum_{i=1}^n \left\{[x,e_i]\wedge (z\lrcorner \ e_i\lrcorner\ \beta)\right\}(x',z') &=&
\sum_{i=1}^n\lela j(z')x,e_i\rira\lela \beta(z) e_i,x'\rira =\lela \beta(z)j(z')x,x'\rira\\
 \sum_{i=1}^n \left\{j(z)e_i\wedge (x\lrcorner \ e_i\lrcorner\ \beta)\right\}(x',z')&=& -\sum_{t=1}^m \lela  j(z)e_i,x'\rira\lela\beta(z')x,e_i\rira=- \lela  j(z)\beta(z')x,x'\rira\\
 \sum_{t=1}^m\left\{j(z_t)x\wedge (z\lrcorner \ z_t\lrcorner\ \gamma)\right\}(x',z')&=&   \sum_{t=1}^m\lela [x,x'],z_t\rira  \  \lela\gamma(z,z'),z_t\rira= \lela\gamma(z,z'),[x,x']\rira .
\end{eqnarray*}

Therefore, \eqref{eq:pp3} for $l=1$ can be written in a simpler form as
$$\lela  (\beta(z)j(z')-2 \beta(z')j(z)+j(z)\beta(z'))x,x'\rira=\lela j(\gamma(z,z'))x,x'\rira, $$
which is equivalent to
$$\beta(z)j(z')- \beta(z')j(z)+[j(z),\beta(z')]= j(\gamma(z,z')),\qquad \mbox{ for all }z,z'\in \mz,$$ and hence we get \eqref{gamma1}. Finally, \eqref{eq:pp1} for $l=2$ gives for every $x\in \mv$
$$0 = \sum_{t=1}^mj(z_t)x\wedge (x\lrcorner \ z_t\lrcorner\ \beta)=\sum_{t=1}^m j(z_t)x\wedge \beta(z_t) x,$$ which is \eqref{gamma2}.\smallskip

Conversely, let $\alpha=\beta+\gamma$ be a $3$-form as in \eqref{eq:f6} and suppose it satisfies \eqref{gamma1} and \eqref{gamma2}. We shall prove that \eqref{eq:pp1}--\eqref{eq:pp3} hold for every $l\in\{0,1,2\}$. From the reasoning above it is easy to check that \eqref{gamma1} implies that \eqref{eq:pp3} holds for $l=1$, and \eqref{gamma2} gives \eqref{eq:pp1} for $l=2$.
Also, note that \eqref{eq:pp1} for $l=1$, \eqref{eq:pp2} for $l=0,1$ and \eqref{eq:pp3} for $l=0,2$ are trivially satisfied because $\alpha_1= \alpha_3=0$. 

Eq. \eqref{eq:pp1} for $l=0$ is equivalent to $\sum_{i=1}^n  [x,e_i]\wedge (x\lrcorner\ e_i\lrcorner\  \beta)=0$, where the left hand side is a 2-form on $\mz$. This form evaluated on $z,z'\in \mz$ reads
\[\sum_{i=1}^n \{ [x,e_i]\wedge (x\lrcorner\ e_i\lrcorner\  \beta)\}(z,z')=\lela (j(z)\beta(z')-j(z')\beta(z))x,x\rira
.
\]

On the other hand, \eqref{gamma1} implies that $\beta(z)j(z')- \beta(z')j(z)$ is a skew-symmetric endomorphism of $\mv$, therefore the equation above vanishes and \eqref{eq:pp1} for $l=0$ is verified. 

Finally, notice that the left hand side of \eqref{eq:pp2} for $l=2$ is
$$\sum_{i=1}^nj(z)e_i\wedge (z\lrcorner\ e_i\lrcorner\ \beta)=-\sum_{i=1}^nj(z)e_i \wedge \beta(z)e_i.$$

This $2$-form, viewed as skew-symmetric endomorphism, is actually the commutator $[\beta(z),j(z)]$ in $\mathfrak{so}(\mv)$, which is zero for all $z\in \mz$, in view of \eqref{gamma1} applied to $z=z'$. Thus \eqref{eq:pp2} also holds for $l=2$.
\end{proof}

Consider again the decomposition of $\mn$ as an orthogonal direct sum of ideals $\mn=\ma\oplus\mn_0$, as in Remark \ref{an}, where $\ma$ is the kernel of $j:\mz\lra \so(\mv)$.

\begin{pro} \label{pro:abf} Every Killing $3$-form on $\mn$ is a sum of a Killing $3$-form on $\mn_0$ and a $3$-form in $\Lambda^3\ma^*$ (which is automatically parallel).
\end{pro}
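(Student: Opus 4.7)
The plan is to exploit Proposition \ref{teo:kill}: any Killing $3$-form on $\mn$ decomposes as $\alpha = \beta + \gamma$ with $\beta: \mz \to \so(\mv)$ and $\gamma$ a skew-symmetric bilinear map $\mz \times \mz \to \mz$, subject to \eqref{gamma1} and \eqref{gamma2}. Using that $\ma = \ker j$, I will show (i) $\beta(a) = 0$ for every $a \in \ma$, and (ii) $\gamma(\mz, \ma) \subseteq \ma$. Together with the orthogonal decomposition into ideals $\mn = \mn_0 \oplus \ma$ from Remark \ref{an}, these two facts force $\alpha$ to split as $\alpha' + \alpha''$ with $\alpha' \in \Lambda^3\mn_0^*$ and $\alpha'' \in \Lambda^3\ma^*$, after which the abelianness of $\ma$ handles the rest.

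For step (i), I evaluate \eqref{gamma1} at $(z, z') = (w, a)$ and at $(z, z') = (a, w)$ with $w \in \mz$ arbitrary and $a \in \ma$. Since $j(a) = 0$, these two instances collapse respectively to
\begin{align*}
j(w)\beta(a) - 2\beta(a)j(w) &= j(\gamma(w, a)), \\
\beta(a) j(w) &= -j(\gamma(w, a)),
\end{align*}
and their sum yields $[j(w),\beta(a)] = 0$ for every $w \in \mz$. The crux is then to observe that the commutation together with the skew-symmetry of both $j(w)$ and $\beta(a)$ makes $j(w)\beta(a)$ symmetric, while the second identity simultaneously exhibits it as skew-symmetric; hence $j(w)\beta(a) = 0$ for all $w \in \mz$. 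Consequently $\Im \beta(a) \subseteq \bigcap_{w\in\mz}\ker j(w) = 0$ by \eqref{int}, so $\beta(a) = 0$.

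Step (ii) then falls out immediately: substituting $\beta(a) = 0$ back into the first of the displayed identities gives $j(\gamma(w,a)) = 0$, i.e.\ $\gamma(w,a) \in \ker j = \ma$ for every $w \in \mz$ and $a \in \ma$. Under the decomposition
$$\Lambda^3\mz^* = \Lambda^3(\ma^\perp)^* \oplus \Lambda^2(\ma^\perp)^* \otimes \ma^* \oplus (\ma^\perp)^* \otimes \Lambda^2\ma^* \oplus \Lambda^3\ma^*,$$
the inclusions $\gamma(\ma^\perp, \ma)\subseteq \ma$ and $\gamma(\ma, \ma)\subseteq \ma$ translate precisely into the vanishing of the two middle ``mixed'' summands. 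Combined with $\beta(\ma)=0$, this produces the announced splitting $\alpha = \alpha' + \alpha''$ with $\alpha' \in \Lambda^3\mn_0^*$ and $\alpha'' \in \Lambda^3\ma^*$.

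Finally, \eqref{eq:nabla} together with $j|_\ma = 0$ yields $\nabla_y a = 0$ for every $y \in \mn$ and $a \in \ma$, so $\alpha''$ is parallel on $\mn$ (hence Killing); consequently $\alpha' = \alpha - \alpha''$ is a Killing form on $\mn$ lying in $\Lambda^3\mn_0^*$, which in view of the orthogonal ideal structure $\mn = \mn_0\oplus\ma$ is the same as being Killing on $\mn_0$. The only serious obstacle is step (i): the key insight is the dichotomy forcing $j(w)\beta(a)$ to be both symmetric (via the commutation and skew-symmetry of its two factors) and skew-symmetric (via the Killing equation). The remainder of the argument is essentially bookkeeping along $\mz = \ma^\perp \oplus \ma$.
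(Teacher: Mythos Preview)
Your proof is correct and follows essentially the same route as the paper's: both arguments specialize \eqref{gamma1} to an element of $\ma$, obtain the commutation $[j(w),\beta(a)]=0$, and then use the symmetric/skew-symmetric dichotomy on $j(w)\beta(a)=-j(\gamma(w,a))$ to force it to vanish. Your version is slightly more explicit in that you state and prove $\beta(a)=0$ directly (via $\Im\beta(a)\subseteq\bigcap_w\ker j(w)=0$), whereas the paper leaves this implicit when asserting that $\alpha-\gamma_1$ is a Killing $3$-form on $\mn_0$; otherwise the two arguments are interchangeable.
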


\begin{proof} Let $\alpha=\beta+\gamma$ be a Killing $3$-form on $\mn$ as in \eqref{eq:f6}. For every $z,z'\in\ma$, \eqref{gamma1} shows that $\gamma(z,z')\in\ma$. Note that by interchanging the roles of $z,z'$ in \eqref{gamma1} we readily obtain
\begin{equation}\label{jp}[j(z),\beta(z')]+[j(z'),\beta(z)]=0, \qquad \mbox{ for all }z,z'\in \mz.\end{equation}

Consider now some arbitrary elements $z\in\ma$ and $z'\in\ma^\perp$,  where $\ma^\bot$ is the orthogonal of $\ma$ in $\mz$. Eq. \eqref{gamma1} shows that $\beta(z)j(z')=j(\gamma(z,z'))$ is skew-symmetric. On the other hand, \eqref{jp} yields $[j(z'),\beta(z)]=0$, so $j(z')$ commutes with $\beta(z)$, and therefore their composition is symmetric. Thus $j(\gamma(z,z'))=0$ for all $z\in\ma$ and $z'\in\ma^\perp$, showing that $\gamma=\gamma_0+\gamma_1$, with $\gamma_0\in\Lambda^3(\ma^\perp)^*$ and $\gamma_1\in\Lambda^3\ma^*$. In particular $\gamma_1$ is parallel, and $\alpha-\gamma_1$ is a Killing $3$-form on $\mn_0$.
\end{proof}

The above result shows that one can reduce the study of Killing $3$-forms on $2$-step nilpotent Lie algebras to the case where $j$ is injective, which we will implicitly assume in the sequel. This ensures that $\mn$ has no abelian factor.
\medskip

In order to interpret the relation \eqref{gamma2}, we need the following general result:

\begin{pro}\label{ab} Let $\mv$ be an Euclidean space and let $A_1,\ldots,A_m,D_1,\ldots,D_m\in \so(\mv)$ be skew-symmetric endomorphisms such that $A_1,\ldots,A_m$ are linearly independent and 
\begin{equation}\label{wedge}\sum_{t=1}^mA_tx\wedge D_tx=0,\qquad\forall x\in\mv.
\end{equation}
Then each $D_t$ is a linear combination of $A_1,\ldots,A_m$.
\end{pro}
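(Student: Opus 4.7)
The plan is to polarize the quadratic identity $\sum_t A_tx\wedge D_tx=0$ into a bilinear identity, reformulate it as an endomorphism equation on $\mv$, and then extract the structural information on the $D_t$ by expanding them in a basis of $\so(\mv)$.

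I would first polarize via $x\to x+y$ to obtain
\[\sum_{t=1}^m\bigl(A_tx\wedge D_ty+A_ty\wedge D_tx\bigr)=0,\qquad\forall\,x,y\in\mv.\]
Evaluating this $2$-form identity on arbitrary vectors $z,w\in\mv$ and using the skew-symmetry of $A_t,D_t$ to move operators across inner products, the resulting scalar equation can be reorganized as a pairing of test vectors with an endomorphism; taking $J=z\wedge w\in\so(\mv)$ and using the fact that such decomposable bivectors span $\so(\mv)$, the identity becomes
\[\sum_{t=1}^m\bigl(A_tJD_t-D_tJA_t\bigr)=0,\qquad\forall\,J\in\so(\mv).\]

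Next, I extend $\{A_1,\ldots,A_m\}$ to a basis $\{A_1,\ldots,A_m,B_1,\ldots,B_k\}$ of $\so(\mv)$ and decompose each $D_t=\sum_s c_{ts}A_s+\sum_r d_{tr}B_r$. Substituting back into the original quadratic identity and using the antisymmetry of the wedge product to discard the symmetric part of $(c_{ts})$ yields
\[\sum_{t<s}(c_{ts}-c_{st})\,A_tx\wedge A_sx+\sum_{t,r}d_{tr}\,A_tx\wedge B_rx=0,\qquad\forall\,x\in\mv.\]
Showing that each $d_{tr}=0$ immediately gives $D_t=\sum_s c_{ts}A_s\in\spam(A_1,\ldots,A_m)$, which is the desired conclusion.

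The main obstacle is this last step, which is equivalent to the injectivity of the $GL(\mv)$-equivariant map
\[\pi\colon\Lambda^2\so(\mv)\longrightarrow S^2\mv^*\otimes\Lambda^2\mv,\qquad A\wedge D\longmapsto\bigl(x\mapsto Ax\wedge Dx\bigr).\]
A conceptual route uses representation theory: $\Lambda^2\so(\mv)\cong\mathbb{S}^{(2,1,1)}\mv$ is an irreducible $GL(\mv)$-module which appears with multiplicity one in the Pieri decomposition $S^2\mv^*\otimes\Lambda^2\mv\cong\mathbb{S}^{(3,1)}\mv\oplus\mathbb{S}^{(2,1,1)}\mv$, so by Schur's lemma $\pi$ is either zero or injective, and a short explicit computation (for instance in $\so(3)$) rules out the former. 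A more hands-on alternative is to choose a generic $x_0\in\mv$ for which $A_1x_0,\ldots,A_mx_0$ are linearly independent, deduce from $\sum_tA_tx_0\wedge D_tx_0=0$ that $D_tx_0$ is a symmetric linear combination of the $A_sx_0$, and then propagate this relation to every $x\in\mv$ via the polarized identity by exploiting an antisymmetry/symmetry clash to kill the residual corrections; this route requires $m\leq\dim\mv-1$, so the remaining cases must be dispatched by a separate reduction.
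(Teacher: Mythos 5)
Your representation-theoretic route (a) is correct and genuinely different from the paper's argument, which is entirely elementary: after the same polarization, the paper contracts the polarized identity with $A_sx$, sets $x=e_i$ and sums over an orthonormal basis, and observes that the resulting endomorphism identity splits into a symmetric and a skew-symmetric part which must vanish separately; the skew-symmetric part is exactly $\sum_t\langle A_s,A_t\rangle D_t=\sum_t\langle A_s,D_t\rangle A_t$, and inverting the Gram matrix of the (linearly independent) $A_t$ finishes the proof. Your route (a) instead establishes the stronger universal statement that $\pi\colon\Lambda^2\so(\mv)\to S^2\mv^*\otimes\Lambda^2\mv$ is injective, from which the proposition follows by the basis expansion you describe; the plethysm $\Lambda^2(\Lambda^2\mv)\cong\mathbb{S}^{(2,1,1)}\mv$ and the Pieri decomposition you quote are correct, and $\pi\neq 0$ is immediate (take $A=e_1\wedge e_2$, $D=e_1\wedge e_3$, $x=e_1$). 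Two caveats: to invoke Schur's lemma for $\mathrm{GL}(\mv)$ you should phrase $\pi$ on $\Lambda^2\mv^*$ via $A\mapsto A(x,\cdot)$, since the identification of skew-symmetric endomorphisms with bivectors is only $\mathrm{O}(\mv)$-equivariant; and your intermediate identity $\sum_t(A_tJD_t-D_tJA_t)=0$, while correct, is not actually used in either closing route, and route (b) is, as you concede, incomplete, so the proof should rest on route (a) alone. What each approach buys: yours gives a cleaner conceptual reason and a statement independent of the linear independence hypothesis on the $A_t$ (which then only enters in the final basis argument), while the paper's computation is self-contained and produces the explicit relation between the Gram matrices $G_{ts}=\langle A_s,A_t\rangle$ and $C_{ts}=\langle A_s,D_t\rangle$, which is reused immediately afterwards to write $\beta=j\circ B$ with $B=CG^{-1}$ symmetric.
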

\begin{proof} By polarization we get 
\begin{equation}\label{wedgep}\sum_{t=1}^m(A_tx\wedge D_ty+A_ty\wedge D_tx)=0,\qquad\forall x,y\in\mv.
\end{equation}
Taking $s\in \{1, \ldots, m\}$ and contracting this equation with $A_sx$, yields for every $ x,y\in\mv$:
$$\sum_{t=1}^m(g(A_sx,A_tx)D_ty-g(A_sx,D_ty)A_tx+g(A_sx,A_ty)D_tx-g(A_sx,D_tx)A_ty)=0.$$
Let $\{e_1,\ldots,e_n\}$ be a $g$-orthonormal basis of $\mv$. Taking $x=e_i$ in the previous equation and summing over $i$, we obtain for every $y\in\mv$:
$$\sum_{t=1}^m(\langle A_s,A_t\rangle D_ty+A_tA_sD_ty-D_tA_sA_ty-\langle A_s,D_t\rangle A_ty)=0,$$
where $\langle\cdot,\cdot\rangle$ denotes the standard scalar product in $\so(\mv)$.
The previous relation reads
$$\sum_{t=1}^m(\langle A_s,A_t\rangle D_t-\langle A_s,D_t\rangle A_t)=\sum_{t=1}^m(-A_tA_sD_t+D_tA_sA_t).$$
As the right hand side in this formula is symmetric and the left hand side is skew-symmetric, they both vanish. We thus get
\begin{equation}\label{ba}\sum_{t=1}^m\langle A_s,A_t\rangle D_t=\sum_{t=1}^m\langle A_s,D_t\rangle A_t.
\end{equation}
This finishes the proof, since the Gram matrix whose coefficients are $G_{ts}:=\langle A_s,A_t\rangle$ is invertible by the assumption that $A_1,\ldots,A_m$ are linearly independent.
\end{proof}

We apply this proposition to the matrices $A_t:=j(z_t)$ and $D_t:=\beta(z_t)$ (recall that $j$ is injective so the matrices $A_t$ are linearly independent). Consider the $m\times m$ matrix $C$ with coefficients $C_{ts}:= \left\langle A_s,D_t\right\rangle$. Then \eqref{ba} reads 
\begin{equation}\label{GC}
\sum_{t=1}^m G_{ts} D_t=\sum_{t=1}^m C_{ts} A_t.
\end{equation}
As $G$ is symmetric, taking a scalar product with $A_{t'}$ for some $t'\in \{1,\ldots,m\}$ in \eqref{GC} yields 
$GC=\,^tCG$, which is equivalent to the fact that $B:=\, CG^{-1}$ is symmetric. By a slight abuse of language, we denote by the same letters the endomorphisms associated to the matrices $C$, $G$ and $B$ in the basis $\{z_1, \ldots, z_m\}$, that is,
$$ G(z_s):=\sum_{t=1}^m G_{ts}z_t,\quad  C(z_s):=\sum_{t=1}^mC_{ts}z_t,\quad  \text{and}\quad B(z_s):=\sum_{t=1}^m B_{ts}z_t.$$  Using again \eqref{GC} we obtain 
$$\beta(G(z_s))=\beta(\sum_{t=1}^m G_{ts}z_t)=\sum_{t=1}^m G_{ts}D_t=\sum_{t=1}^m {C}_{ts} A_t=\sum_{t=1}^m C_{ts} j(z_t)=j(C(z_s)),$$
so $\beta(G(z_s))=j( C(z_s))$ for every $s$. This is equivalent to $\beta=j\circ B$, for the above defined symmetric endomorphism $B$ of $\mz$. This interpretation of \eqref{gamma2} gives the following consequence of Proposition \ref{teo:kill}.

\begin{cor}\label{cor:jM} Let $\mn$ be a $2$-step nilpotent Lie algebra without abelian factors (i.e. such that $j:\mz\lra \so(\mv)$ is injective). A $3$-form $\alpha$ on $\mn$ is a Killing form if and only if 
\begin{equation}\label{eq:alMt}
\alpha=j\circ B +\gamma
\end{equation} 
where $\gamma\in \Lambda^3\mz^*$ and $B\in {\rm Sym}^2\mz$ satisfy
\begin{equation}\label{jM} j(Bz)j(z')- j(Bz')j(z)+[j(z),j(Bz')]= j(\gamma(z,z')) \mbox{ for all }z,z'\in \mz.
\end{equation}
\end{cor}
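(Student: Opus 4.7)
The plan is to package the ingredients that have already been assembled in the preceding discussion: Proposition~\ref{teo:kill} characterises Killing $3$-forms by the two conditions \eqref{gamma1}--\eqref{gamma2}, and Proposition~\ref{ab} together with the symmetry argument for $B=CG^{-1}$ carried out between Proposition~\ref{ab} and the statement of Corollary~\ref{cor:jM} converts \eqref{gamma2} into the structural statement $\beta=j\circ B$ with $B$ symmetric. So the corollary is essentially a bookkeeping exercise, and I would present it as such: prove the two implications separately, each one in a few lines.

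For the forward implication, assume $\alpha=\beta+\gamma$ is a Killing $3$-form. Apply Proposition~\ref{teo:kill} to extract \eqref{gamma1} and \eqref{gamma2}. Since $j:\mz\to\so(\mv)$ is injective, the endomorphisms $A_t:=j(z_t)$ are linearly independent, so Proposition~\ref{ab} applies to \eqref{gamma2} with $D_t:=\beta(z_t)$. The analysis performed after Proposition~\ref{ab} then produces a symmetric endomorphism $B$ of $\mz$ such that $\beta=j\circ B$. Substituting this expression for $\beta$ into \eqref{gamma1} gives exactly \eqref{jM}, yielding the decomposition \eqref{eq:alMt}.

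For the converse, suppose $\alpha=j\circ B+\gamma$ with $B\in\mathrm{Sym}^2\mz$ and \eqref{jM} holding. Relation \eqref{jM} is the same as \eqref{gamma1} with $\beta=j\circ B$, so it suffices to verify \eqref{gamma2}. Expanding $\beta(z_t)=j(Bz_t)=\sum_s B_{st}j(z_s)$ and using the symmetry $B_{st}=B_{ts}$, I would swap the indices in
\begin{equation*}
\sum_{t=1}^m j(z_t)x\wedge\beta(z_t)x=\sum_{s,t}B_{st}\,j(z_t)x\wedge j(z_s)x
\end{equation*}
to see that the right hand side equals its own negative and hence vanishes. Proposition~\ref{teo:kill} then certifies that $\alpha$ is a Killing $3$-form.

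The only point that requires any genuine thought is tracking the transpositions in the symmetry argument for $B$, but since that calculation has been written out in the paragraph preceding the corollary, the main obstacle is simply to cite it cleanly rather than to redo it; I would therefore keep the proof very short, referring back to \eqref{ba}--\eqref{GC} for the construction of $B$ and to the polarisation identity used there for the symmetry.
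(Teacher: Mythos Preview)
Your proposal is correct and matches the paper's approach: the paper presents no separate proof environment for this corollary, instead deriving the forward direction in the discussion between Proposition~\ref{ab} and the statement (exactly the chain \eqref{ba}--\eqref{GC} you cite) and treating the corollary as an immediate reformulation of Proposition~\ref{teo:kill} once $\beta=j\circ B$ is established. Your explicit verification of the converse via the index-swap $\sum_{s,t}B_{st}\,j(z_t)x\wedge j(z_s)x=0$ is the natural complement that the paper leaves implicit.
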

In the above formula \eqref{eq:alMt} and in the sequel, the linear map $j\circ B:\mz\to\so(\mv)$ is viewed as a $3$-form on $\mn$ via the standard embedding $\mz\otimes\so(\mv)\subset \Lambda^3\mn^*$ given by the metric and skew-symmetrization.  Note that by \eqref{eq:alMt}--\eqref{jM}, the symmetric endomorphism $B$ is non-zero whenever the Killing form $\alpha$ is non-zero.
\smallskip

Recall that a metric Lie algebra is called reducible if it can be decomposed as a direct sum of orthogonal ideals.

\begin{pro} \label{pro:alsum} Suppose $(\mn,g)$  is reducible and $\mn=\mn'\oplus \mn''$ as orthogonal direct sum of ideals. Then
\begin{enumerate}
\item  If $\alpha'$ and $\alpha''$ are Killing $3$-forms on $\mn'$ and $\mn''$, respectively, then $\alpha'+\alpha''$ is  a Killing $3$-form on $\mn$.
\item If $\alpha $ is a Killing $3$-form on $\mn$, then $\alpha=\alpha'+\alpha''$ where $\alpha'$ and $\alpha''$ are Killing $3$-forms on $\mn'$ and $\mn''$, respectively.
\end{enumerate}
\end{pro}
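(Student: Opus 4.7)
My plan is to prove part (1) by direct verification using Corollary \ref{cor:jM} and the orthogonal ideal structure, and part (2) by projecting the equations \eqref{jM} and \eqref{gamma2} onto the factor subspaces. Throughout, I use the standing assumption (stated in the text just before the proposition) that $j$ is injective. From the argument already given in the proof of Proposition \ref{pro:dec2}, the decomposition $\mn=\mn'\oplus\mn''$ induces orthogonal splittings $\mv=\mv'\oplus\mv''$ and $\mz=\mz'\oplus\mz''$; since $[\mn',\mn'']=0$, we have $j(z')|_{\mv''}=0$ for $z'\in\mz'$ and $j(z'')|_{\mv'}=0$ for $z''\in\mz''$, so each $j(z)$ preserves $\mv'$ and $\mv''$.

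For part (1), I will represent the given Killing forms via Corollary \ref{cor:jM} as $\alpha'=j\circ B'+\gamma'$ and $\alpha''=j\circ B''+\gamma''$, where $B'\in{\rm Sym}^2\mz'$, $B''\in{\rm Sym}^2\mz''$, $\gamma'\in\Lambda^3(\mz')^*$, and $\gamma''\in\Lambda^3(\mz'')^*$. Setting $B:=B'\oplus B''\in{\rm Sym}^2\mz$ (block-diagonal, hence symmetric) and $\gamma:=\gamma'+\gamma''\in\Lambda^3\mz^*$, I will verify that $(B,\gamma)$ satisfies \eqref{jM} and \eqref{gamma2}. A case analysis reduces to the corresponding equations on $\mn'$ and $\mn''$ when both arguments lie in the same factor; in the mixed case $z\in\mz'$, $z'\in\mz''$, all three terms on the left-hand side of \eqref{jM} vanish because the operators involved act on disjoint subspaces, while the right-hand side vanishes since $\gamma(z,z')=0$ by construction of $\gamma$.

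For part (2), I will write the given Killing $3$-form as $\alpha=j\circ B+\gamma$ via Corollary \ref{cor:jM} and show that $B$ is block-diagonal with respect to $\mz=\mz'\oplus\mz''$ while $\gamma$ has no mixed components. Writing $B$ as a symmetric block matrix with off-diagonal block $B_{12}\colon\mz''\to\mz'$, the key step is to prove $B_{12}=0$. For $z\in\mz'$ and $z'\in\mz''$, projecting \eqref{jM} onto the action on $\mv'$ and using that $j(z')|_{\mv'}=0$ together with $j(B_{21}z)|_{\mv'}=0$ (since $B_{21}z\in\mz''$), I obtain
\begin{equation*}
j(z)\,j(w')-2\,j(w')\,j(z)=j(v')\qquad\text{on }\mv',
\end{equation*}
where $w':=B_{12}z'\in\mz'$ and $v':=\gamma(z,z')_{\mz'}$. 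Since the right-hand side is skew-symmetric, so is the left-hand side, which forces $j(z)$ and $j(w')$ to anticommute on $\mv'$. Separately, the antisymmetric consequence \eqref{jp} of \eqref{jM} (already derived in the proof of Proposition \ref{pro:abf}), projected onto $\mv'$, forces $j(z)$ and $j(w')$ to commute. Together these yield $j(z)\,j(w')=0$ on $\mv'$ for every $z\in\mz'$, whence $j(w')(\mv')\subset\bigcap_{z\in\mz'}\ker j(z)|_{\mv'}=0$ by Lemma \ref{lm:imjzv} applied to the ideal $\mn'$; the injectivity of $j$ on $\mz$ then gives $w'=0$, so $B_{12}=0$.

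Once $B$ is block-diagonal, projecting \eqref{jM} for $z,z'\in\mz'$ onto $\mv''$ gives $j(\gamma(z,z')_{\mz''})|_{\mv''}=0$, hence $\gamma(\mz',\mz')\subset\mz'$, and analogously $\gamma(\mz'',\mz'')\subset\mz''$. In the mixed case $z\in\mz'$, $z'\in\mz''$, the entire left-hand side of \eqref{jM} vanishes by the disjoint-support argument of part (1), so $\gamma(z,z')=0$ by injectivity of $j$. Defining $\alpha':=j\circ B_{11}+\gamma|_{\Lambda^3(\mz')^*}$ and $\alpha''$ analogously, Corollary \ref{cor:jM} applied to the restricted data on $\mn'$ and $\mn''$ certifies that each is a Killing $3$-form on the corresponding factor, and $\alpha=\alpha'+\alpha''$ by construction. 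I expect the main obstacle to be the vanishing of $B_{12}$: it requires combining the direct content of \eqref{jM} with its antisymmetric counterpart \eqref{jp} and then invoking the no-common-kernel result of Lemma \ref{lm:imjzv}; all remaining pieces of part (2) then follow by routine projection arguments.
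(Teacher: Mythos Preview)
Your proposal is correct and follows essentially the same route as the paper's proof: both arguments represent the Killing $3$-form via Corollary \ref{cor:jM}, use \eqref{jM} together with its symmetrized consequence \eqref{jp} to kill the off-diagonal block of $B$, and then read off the splitting of $\gamma$ from injectivity of $j$. The only tactical difference is that the paper first uses \eqref{jp} to obtain $[j'(z'),j'(v')]=0$ and then substitutes back into \eqref{jM} to get a symmetric-equals-skew identity (whence $j'(v')j'(z')=0$, and the specialization $z'=v'$ yields $j'(v')=0$), whereas you extract anticommutation directly from the skew-symmetry of the right-hand side of the projected \eqref{jM}, combine it with commutation from \eqref{jp}, and invoke Lemma \ref{lm:imjzv} instead of specializing; these are equivalent manipulations of the same two equations. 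One small redundancy: once you have written $\alpha=j\circ B+\gamma$, Corollary \ref{cor:jM} only requires verifying \eqref{jM}, so checking \eqref{gamma2} in part (1) is unnecessary.
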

\begin{proof}
Since $\mn=\mn'\oplus\mn''$, we have $\mv=\mv'\oplus\mv''$ and $\mz=\mz'\oplus\mz''$ as orthogonal direct sums; also $j=j'+j''$ where $j':\mz'\lra \so(\mv')$ and $j'':\mz''\lra \so(\mv'')$. In particular $[j(z'),j(z'')]=0$ if $z'\in \mz'$ and $z''\in \mz''$.\smallskip

 (1) Let $\alpha'\in \Lambda^3(\mn')^*$, $\alpha''\in \Lambda^3(\mn'')^*$ be Killing $3$-forms, and let $B'\in {\rm Sym}^2\mz'$, $B''\in {\rm Sym}^2\mz''$, $\gamma'\in \Lambda^3(\mz')^*$, $\gamma''\in \Lambda^3(\mz'')^*$ be such that $\alpha'=j'\circ B'+\gamma'$ and $\alpha''=j''\circ B''+\gamma''$ as in Corollary \ref{cor:jM}. Define $B\in {\rm Sym}^2\mz$ by $B|_{\mz'}:=B'$ and $B|_{\mz''}:=B''$; also set $\gamma=\gamma'+\gamma''\in \Lambda^3\mz^*$. The commutation of $j(z')$ and $j(z'')$ for $z'\in \mz',z''\in \mz''$ implies that $B$ and $\gamma$ satisfy \eqref{jM} for every element in $\mz$. Therefore $\alpha:=j\circ B+\gamma$ is a Killing $3$-form on $\mn$ and clearly $\alpha=\alpha'+\alpha''$.\smallskip

 (2) Let $\alpha=j\circ B+\gamma$ be a Killing $3$-form on $\mn$; we shall prove that $B$ preserves $\mz'$ and $\mz''$. Let $z'\in \mz'$ and $z''\in \mz''$, and denote $Bz'=u'+u''$, $Bz''=v'+v''$ where $u',v'\in \mz'$ and $u'',v''\in \mz''$. Then \eqref{jM} implies
\[
0=[j(z'),j(Bz'')]+[j(z''),j(Bz')]
=[j'(z'),j'(v')]+[j''(z''),j''(u'')].
\]
The brackets in the right side of the above equation belong to $\so(\mv')$ and $\so(\mv'')$ respectively, so they both vanish: $[j'(z'),j'(v')]=0$ and $[j''(z''),j''(u'')]=0$. Using this in \eqref{jM} we get
\begin{equation}
\label{j'}
j''(u'')j''(z'')- j'(v')j'(z')= j(\gamma(z',z'')).
\end{equation}
The left hand side of this equality is symmetric while the right hand side is skew-symmetric, so they both vanish. Moreover, $j''(u'')j''(z'')$ is an endomorphism of $\mv''$ and $ j'(v')j'(z')$ is an endomorphism of $\mv'$, so again both terms are zero.

For fixed $z''\in \mz''$, we have $j'(v')j'(z')=0$ for all $z'\in\mz'$ and, in particular, $(j'(v'))^2=0$ which implies $v'=0$ (as $j'$ is injective). Similarly, we obtain $u''=0$ for $z'\in \mz'$ fixed, so $B$ preserves $\mz'$ and $\mz''$. As a consequence, and using \eqref{j'}, we see that $\gamma(z',z'')=0$ and thus $\gamma=\gamma'+\gamma''$ where $\gamma'\in \Lambda^3(\mz')^*$ and $\gamma''\in \Lambda^3(\mz'')^*$. 

Let $B'$, $B''$ denote the restrictions of $B$ to $\mz'$ and $\mz''$, respectively. Then $B'$ and $B''$ verify \eqref{jM} on their corresponding Lie algebras, $\mn'$ and $\mn''$. Moreover $\alpha':=j'\circ B'+\gamma'$ and $\alpha'':=j''\circ B''+\gamma''$ are Killing $3$-forms on $\mn'$ and $\mn''$, respectively, and $\alpha'+\alpha''=\alpha$ as claimed.
\end{proof}

Because of the previous result, we can restrict our study of Killing $3$-forms to the case where the $2$-step nilpotent Lie algebra is irreducible. To address this case we will make use of the following lemma.

\begin{lm} \label{reducn} Let $\mn=\mv\oplus\mz$ be a 2-step nilpotent Lie algebra. Let $B$ be a symmetric endomorphism of $\mz$ such that  for every $z,z'\in \mz$ it satisfies
\begin{enumerate}
\item \label{m1} $[j(z),j(Bz)]=0$ ,
\item \label{m2} $j(Bz)j(z')- j(Bz')j(z)\in \so(\mv)$. 
\end{enumerate}
If $\mn$ is reducible, then $B$ is a multiple of the identity.
\end{lm}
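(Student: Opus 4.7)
The plan follows the template of Lemma \ref{mm}: use the $B$-eigenspace decomposition of $\mz$ as a blueprint for an orthogonal ideal decomposition of $\mn$, and then read off the conclusion from the indecomposability hypothesis. Concretely, write $\mz = \mz^1 \oplus \cdots \oplus \mz^s$ orthogonally so that $B|_{\mz^i} = \mu_i \operatorname{Id}$ with pairwise distinct $\mu_i$; the goal is to prove $s = 1$, equivalently that $B$ has a unique eigenvalue.

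The decisive step is to extract two algebraic relations, one from each hypothesis, for $z \in \mz^i$ and $z' \in \mz^j$ with $i \ne j$. First, polarizing (1) in $z$ gives $[j(z), j(Bz')] + [j(z'), j(Bz)] = 0$, which on cross-eigenspaces reads $(\mu_j - \mu_i)[j(z), j(z')] = 0$, so $[j(z), j(z')] = 0$. Second, hypothesis (2) says $\mu_i j(z) j(z') - \mu_j j(z') j(z) \in \so(\mv)$; since $j(z), j(z')$ are skew one has $(j(z)j(z'))^T = j(z')j(z)$, and so the symmetric part of this expression equals $\tfrac{1}{2}(\mu_i - \mu_j)\{j(z), j(z')\}$. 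Its vanishing, together with $\mu_i \ne \mu_j$, forces $\{j(z), j(z')\} = 0$. Commutation plus anticommutation yield $j(z) j(z') = j(z') j(z) = 0$ across distinct eigenspaces of $B$.

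Next I set $\mv^i := \sum_{z \in \mz^i} \operatorname{Im} j(z)$. From $j(z') j(z) = 0$ one gets $\operatorname{Im} j(z) \subset \ker j(z') = (\operatorname{Im} j(z'))^{\perp}$, hence $\mv^i \perp \mv^j$ for $i \ne j$; Lemma \ref{lm:imjzv} then upgrades $\mv = \sum_i \mv^i$ to an orthogonal direct sum. Setting $\mn^i := \mv^i \oplus \mz^i$, a short verification shows these are mutually orthogonal ideals: for $x \in \mv^i$, $y \in \mv^j$ with $i \ne j$ and $z = \sum_l z^l \in \mz$, one has $\langle [x, y], z \rangle = \langle j(z)x, y \rangle = \langle j(z^i)x, y \rangle = 0$ because $j(z^l) x = 0$ for $l \ne i$ (by Step 2, as $x \in \operatorname{Im} j(\mz^i)$) and $j(z^i) x \in \mv^i \perp \mv^j \ni y$. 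An analogous calculation shows $[\mv^i, \mv^i] \subset \mz^i$, so $\mn = \bigoplus_i \mn^i$ orthogonally. The indecomposability assumption on $\mn$ now forces $s = 1$, so $B$ is a scalar multiple of the identity.

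The main obstacle is the simultaneous extraction of commutation from (1) and anticommutation from the skew-symmetry in (2): neither condition alone suffices to annihilate the products $j(z) j(z')$ across distinct $B$-eigenspaces. Once both are secured, the construction of the orthogonal ideal decomposition, and the final use of the hypothesis on $\mn$, parallel the argument of Lemma \ref{mm}.
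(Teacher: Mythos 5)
Your proof is correct and follows essentially the same route as the paper's: decompose $\mz$ into $B$-eigenspaces, use hypothesis (1) to get commutation and hypothesis (2) to kill the products $j(z)j(z')$ across distinct eigenspaces, then assemble the orthogonal ideals $\mn^i=\mv^i\oplus\mz^i$ and invoke irreducibility. The only (immaterial) difference is that you extract the anticommutator directly from the symmetric part of the expression in (2), whereas the paper first uses commutation to rewrite that expression as $(\lambda_i-\lambda_j)j(z)j(z')$ and then observes it is both symmetric and skew.
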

\begin{proof}
Let $\lambda_1, \ldots, \lambda_s$ denote the mutually distinct eigenvalues of $B$ on $\mz$, and let $\mz_i$ be the corresponding eigenspaces. For each $i=1, \ldots, s$, set
\begin{equation}
\mv_i:={\rm span}\{j(z)x:z\in\mz_i, x\in\mv\}.
\end{equation}
 Lemma \ref{lm:imjzv} implies that $\mv=\sum_{i=1}^s \mv_i$; we shall prove that this is a direct sum. We claim that for every $i,j\in \{1, \ldots,s\}$ with $i\neq j$, and $z\in \mz_i$, $z'\in \mz_j$, we have $j(z)j(z')=0$. Indeed,  polarizing \eqref{m1} and evaluating on $z,z'$ we obtain
\begin{equation}
(\lambda_j-\lambda_i) [j(z),j(z')]=0,
\end{equation} so $j(z)$ and $j(z')$ commute and thus $j(z)j(z')$ is a symmetric  endomorphism of $\mv$. On the other hand, \eqref{m2} gives
\begin{equation}\label{eq:iktau}
(\lambda_i-\lambda_j)j(z)j(z')\in \so(\mv),
\end{equation} 
so $j(z)j(z')$ is a skew-symmetric map which is also symmetric, whence $j(z)j(z')=0$ as claimed. 

Therefore  $\lela j(z)x,j(z')x'\rira=0$ for every $x,x'\in \mv$, and thus $\mv_i\bot\mv_j$, showing that $\mv=\bigoplus_{i=1}^s\mv_i$ is an orthogonal direct sum. 

Fix $i\in \{1, \ldots, s\}$ and let $z\in \mz_i$. For any $z'\in \mv_j$ with $i\neq j$ and $x\in \mv$, we have $j(z)j(z')x=0$ so $j(z)|_{\mv_j}=0$. Therefore, if $x\in \mv_i$ and $x'\in \mv_j$,  $\lela z, [x,x']\rira=0$ for all $z\in \mz$ hence $[\mv_i,\mv_j]=0$. Similarly, we obtain that $[\mv_i,\mv_i]\subset \mz_i$. Hence $\mn_i:=\mv_i\oplus\mz_i$ is an ideal of $\mn$ and $\mn_i\bot\mn_j$ if $i\neq j$. Thus, if $\mn$ is irreducible then $s=1$ and $B=\lambda_1{\rm Id}_\mz$.
\end{proof}

\begin{pro}\label{pro:onedim} An irreducible $2$-step nilpotent Lie algebra $\mn$ admits a non-zero Killing $3$-form if and only if the following conditions hold:
\begin{equation}
\left\{
\begin{array}{l}
\text{\rm (1) }j(\mz) \mbox{ is a subalgebra of }\so(\mv),\\
\text{{\rm (2)} for each }z\in \mz,\mbox{ the map }z'\mapsto j^{-1}[j(z),j(z')]\mbox{ is in }\so(\mz).
\end{array}\right.\label{nr}  
\end{equation}
In this case, the space of Killing $3$-forms on $\mn$ is one-dimensional.
\end{pro}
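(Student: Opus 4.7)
The plan is to invoke Corollary \ref{cor:jM}, which parametrizes Killing $3$-forms as pairs $(B,\gamma)$ with $B\in\Sym^2\mz$, $\gamma\in\Lambda^3\mz^*$, satisfying \eqref{jM}, and to use Lemma \ref{reducn} to force $B$ to be a scalar multiple of the identity. Conditions (1)--(2) will then fall out of \eqref{jM} after this reduction. First I would check that the $B$ coming from any Killing $3$-form verifies the two hypotheses of Lemma \ref{reducn}: setting $z'=z$ in \eqref{jM} and using $\gamma(z,z)=0$ gives $[j(z),j(Bz)]=0$, while rewriting \eqref{jM} as
$$j(Bz)j(z')-j(Bz')j(z) = j(\gamma(z,z'))-[j(z),j(Bz')]$$
exhibits the left hand side as a sum of two elements of $\so(\mv)$ (the commutator of two skew-symmetric operators being itself skew-symmetric), establishing the second hypothesis. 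Irreducibility then forces $B=\lambda\,\mathrm{Id}_\mz$, with $\lambda\neq 0$ whenever $\alpha\neq 0$, since otherwise $\gamma$ would vanish by injectivity of $j$.

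Substituting $B=\lambda\,\mathrm{Id}_\mz$ reduces \eqref{jM} to $2\lambda[j(z),j(z')]=j(\gamma(z,z'))$, from which (1) is immediate, and (2) follows from the observation that for fixed $z$ the map $z'\mapsto \gamma(z,z')$ is already skew-symmetric on $\mz$ because $\gamma$ is a $3$-form. For the converse, assuming (1) and (2), I would define $B:=\tfrac12\,\mathrm{Id}_\mz$ and $\gamma(z,z'):=j^{-1}[j(z),j(z')]$, which is well-defined by (1). The resulting $\gamma$ is skew in its first two arguments (bracket anti-symmetry) and in its last two (condition (2)); these two adjacent skew-symmetries force total skew-symmetry, so $\gamma\in\Lambda^3\mz^*$. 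A direct substitution then verifies \eqref{jM}, and Corollary \ref{cor:jM} delivers a non-zero Killing $3$-form.

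One-dimensionality follows because once $\lambda$ is fixed, $\gamma$ is uniquely determined by \eqref{jM} through the injectivity of $j$, so the space of Killing $3$-forms is parametrized by $\lambda\in\R$. The main conceptual step I expect to rely on is the application of Lemma \ref{reducn}, which does the real decomposition work; everything else amounts to direct unwinding of \eqref{jM}, with the only subtlety being the passage from two adjacent skew-symmetries to total skew-symmetry of $\gamma$ in the converse direction.
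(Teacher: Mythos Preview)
Your proposal is correct and follows essentially the same route as the paper's proof: reduce via Corollary \ref{cor:jM} to the pair $(B,\gamma)$, apply Lemma \ref{reducn} to force $B=\lambda\,\mathrm{Id}_\mz$, and read off conditions (1)--(2) and one-dimensionality from the resulting identity $2\lambda[j(z),j(z')]=j(\gamma(z,z'))$. You actually spell out two points the paper leaves implicit---the verification that $B$ satisfies the hypotheses of Lemma \ref{reducn}, and the passage from adjacent skew-symmetries of $\gamma$ to total skew-symmetry---but the overall strategy and its key inputs are identical.
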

\begin{proof}
Let $\alpha=j\circ B+\gamma$  be a non-zero Killing $3$-form on $\mn$. By \eqref{jM}, $B$ verifies the conditions in Lemma \ref{reducn} so it is a multiple of the identity: $B=\lambda {\rm Id}_\mz$ with $\lambda\neq  0$. Then, \eqref{jM} reads
\begin{equation}
\label{eq:taul}
j(\gamma(z,z'))=2\lambda[j(z),j(z')] ,\quad \mbox{for all } z,z'\in \mz.
\end{equation}

Thus $j(\mz)$ is a subalgebra of $\so(\mv)$ and for each $z\in \mz$ fixed, the endomorphism defined as $z'\mapsto j^{-1}[j(z),j(z')]$ is a skew-symmetric map of $\mz$, since $\gamma$ is a $3$-form. Notice that $\gamma$ is uniquely determined by \eqref{eq:taul} because $j$ is injective. 

Conversely, if $j(\mz)$ is a subalgebra of $\so(\mv)$, then $\gamma_0(\cdot,\cdot)=2j^{-1}[j(\cdot),j(\cdot)]$ is well defined and it is indeed a $3$-form on $\mz$ because of (2). Moreover, by Corollary \ref{cor:jM}, 
$j+\gamma_0$ is a Killing $3$-form on $\mn$.
\end{proof}

\begin{remark}\label{p2}
A case by case analysis of the Berger-Simons Holonomy Theorem easily shows that a de Rham irreducible Riemannian manifold of dimension $\geq 4$ which carries a non-zero parallel 3-form is Einstein. Since a left-invariant metric on a 2-step nilpotent Lie group is never Einstein \cite[Proposition 2.5]{EB}, the Killing 3-forms constructed above are not parallel, except in dimension 3.
\end{remark}

The two conditions in the last proposition are closely related to the property of the corresponding simply connected Riemannian Lie group to be naturally reductive, so we recall here the relevant definitions.\medskip

A homogeneous Riemannian manifold $(M,g)$ is naturally reductive if there is a transitive group $G$ inside the full isometry group ${\rm Iso}(M,g)$ and a reductive decomposition $\mgg=\mh\oplus \mm$ of the Lie algebra $\mgg$ of $G$, such that \begin{equation*}
\lela [x,y]_{\mm},z\rira+\lela y, [x,z]_{\mm}\rira=0, \qquad \mbox{ for all }x,y,z\in \mm.
\end{equation*}

Let $(N,\bil)$ be a simply connected nilpotent Lie group endowed with a left-invariant Riemannian metric and denote by $\mn$ its Lie algebra. C. Gordon proved that if $(N,g)$ is naturally reductive, then $N$ is at most $2$-step nilpotent \cite{GO2}. Moreover, $2$-step nilpotent Lie groups which are naturally reductive can be characterized as follows (see also \cite{LA}):

\begin{teo}(cf. \cite{GO2})\label{teo:grd}
Let $(N,g)$ be a simply connected $2$-step nilpotent Lie group without Euclidean factor. Let $\mn$ denote its Lie algebra and consider the orthogonal decomposition $\mn=\mv\oplus\mz$. Then $N$ is naturally reductive if and only if $\mn$ satisfies the conditions in \eqref{nr}.
\end{teo}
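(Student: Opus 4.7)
The plan is to derive this theorem from Gordon's characterization \cite{GO2} of naturally reductive 2-step nilpotent Lie groups, by reinterpreting her algebraic conditions in terms of Eberlein's $j$-map. The hypothesis that $N$ has no Euclidean factor ensures (Remark \ref{an}) that $j : \mz \to \so(\mv)$ is injective, so the composition $j^{-1}[j(z),j(z')]$ in condition (2) is well-defined.

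For the converse direction, assume (1) and (2). For each $z \in \mz$ define an endomorphism $D_z$ of $\mn$ by $D_z|_\mv := j(z)$ and $D_z|_\mz(z') := j^{-1}[j(z),j(z')]$; condition (1) makes $D_z|_\mz$ well-defined and (2) gives $D_z|_\mz \in \so(\mz)$. A direct calculation using \eqref{eq:jota} and (2) shows that $D_z \in \Dera(\mn) \cap \so(\mn)$, and the Jacobi identity applied to $[j(z_1),[j(z_2),j(z')]]$ combined with (1) gives $[D_{z_1},D_{z_2}] = D_\zeta$ with $j(\zeta) = [j(z_1),j(z_2)]$, so that $\mk := \{D_z : z \in \mz\}$ is a Lie subalgebra of $\Dera(\mn) \cap \so(\mn)$. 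Integrating $\mk$ to a connected subgroup $K \subseteq \Aut(\mn,g)$, the semidirect product $G := N \rtimes K$ acts transitively on $N$ by isometries. I would then show that the complement $\mm := \{x + z - \tfrac12 D_z : x \in \mv,\ z \in \mz\} \subset \mgg = \mn \oplus \mk$ is $\Ad(K)$-invariant and satisfies the naturally reductive identity $\langle [X,Y]_\mm, Z\rangle + \langle Y, [X,Z]_\mm\rangle = 0$ by expanding it block-wise against $\mv \oplus \mz$.

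For the forward direction, assume $(N,g)$ is naturally reductive under a transitive subgroup $G$ of $\Iso(N,g)^o$. By Wilson's theorem $\Iso(N,g)^o = N \rtimes H$ with $H \subset \Aut(\mn,g)$ acting by orthogonal automorphisms, so $G = N \rtimes K$ and $\mgg = \mn \oplus \mk$ with $\mk \subset \Dera(\mn) \cap \so(\mn)$ (note that any derivation preserves $\mz$ and, being skew, also $\mv$). Any $\Ad(K)$-invariant complement to $\mk$ is of the form $\mm_\phi = \{X + \phi(X) : X \in \mn\}$ for a $K$-equivariant linear map $\phi : \mn \to \mk$. Expanding the naturally reductive identity on $\mm_\phi$ and projecting block-wise against $\mn = \mv \oplus \mz$, the $(\mv,\mv;\mz)$-component forces $\phi(z)|_\mv = j(z)$ for every $z \in \mz$, so $\phi(\mz)$ must contain the derivations $D_z$; the bracket-closure of $\mk$ then forces condition (1), and the derivation condition applied to pairs of central elements, combined with the $(\mz,\mz;\mz)$-component, yields condition (2).

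The main obstacle is the verification that $D_z$ is a derivation of $\mn$ under (1) and (2). For $v_1,v_2 \in \mv$ and $z' \in \mz$, the identity $\langle D_z[v_1,v_2]_\mn - [D_z v_1, v_2]_\mn - [v_1, D_z v_2]_\mn,\, z' \rangle = 0$ expands, via \eqref{eq:jota} and skew-symmetry of $j(z)$, to the relation $\langle j^{-1}[j(z), j([v_1,v_2])], z'\rangle_\mz = -\langle [j(z), j(z')] v_1, v_2\rangle$; condition (2) lets one transfer the $j^{-1}$ across the inner product on $\mz$, and a second application of \eqref{eq:jota} closes the identity. A secondary subtlety is the coefficient $-\tfrac12$ in the naturally reductive complement $\mm$, which mirrors the factor $\tfrac12$ in Koszul's formula \eqref{eq.Koszul1} and is dictated by the cross-terms in the naturally reductive equation.
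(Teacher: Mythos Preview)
The paper does not supply its own proof of this statement: it is quoted as Gordon's theorem with the attribution ``(cf.\ \cite{GO2})'' and used as a black box. So there is no in-paper argument to compare your proposal against; you are effectively reconstructing Gordon's proof.

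Your converse direction is essentially correct and is indeed the standard construction: the verification that $D_z$ is a skew-symmetric derivation via condition~(2) and \eqref{eq:jota} goes through exactly as you outline, and the closure of $\mk=\{D_z\}$ under brackets follows from the Jacobi identity in $\so(\mv)$ together with condition~(1). The complement you write down (with a coefficient adjusting for the $\tfrac12$ in Koszul's formula) is the right idea.

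The forward direction has a genuine gap. You invoke Wilson's theorem to write $\Iso(N,g)^o = N\rtimes H$ and then assert $G = N\rtimes K$; but a transitive subgroup $G\subset \Iso(N,g)^o$ need not contain $N$, so it is not automatically a semidirect product of this shape. Gordon's argument (and Lauret's reformulation in \cite{LA}) handles this by showing that if $(N,g)$ is naturally reductive with respect to \emph{some} transitive $G$, then one may replace $G$ by a group of the form $N\rtimes K$ with $K\subset \Aut(\mn,g)$ while preserving natural reductivity; this reduction step is nontrivial and is where most of the work in \cite{GO2} lies. Once you have $G=N\rtimes K$, your graph description $\mm_\phi$ of the complement is correct, but the claim that the $(\mv,\mv;\mz)$-block ``forces $\phi(z)|_\mv=j(z)$'' also needs more care: a priori $\phi(z)|_\mv$ is only an element of $\mk|_\mv\subset\so(\mv)$, and you must use the full naturally reductive identity together with \eqref{eq:jota} to identify it with $j(z)$ up to a scalar that is then normalized.
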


This result will lead to a geometrical classification of left-invariant Killing $3$-forms on simply connected $2$-step nilpotent Lie groups. 

\begin{teo}\label{th5}
Let $(N,g)$ be a simply connected $2$-step nilpotent Lie group endowed with a left-invariant Riemannian metric. Then any invariant Killing $3$-form is the sum of left-invariant Killing $3$-forms  on its de Rham factors.
Moreover, the dimension of $\mathcal K^3(N,g)$, the space of left-invariant Killing $3$-forms on $(N,g)$,  is
\begin{equation}\label{teo3}
\dim\mathcal K^3(N,g)=\frac{d(d-1)(d-2)}6+r,
\end{equation} where $d$ and $r$ denote the dimension of the Euclidean factor and the number of naturally reductive factors in the de Rham decomposition of $(N,g)$ respectively.
\end{teo}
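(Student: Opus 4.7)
The strategy is to assemble the previously established decomposition results for Killing $3$-forms with the classification of the irreducible case via Proposition \ref{pro:onedim} and Theorem \ref{teo:grd}. First I would invoke Proposition \ref{dR} to write the de Rham decomposition at the level of Lie algebras as
\begin{equation*}
(\mn,g)=(\ma,g_0)\oplus\bigoplus_{i=1}^q(\mn_i,g_i),
\end{equation*}
where $\ma$ is abelian of dimension $d$ (corresponding to the Euclidean factor $\R^d$ of $N$) and each $(\mn_i,g_i)$ is an irreducible $2$-step nilpotent metric Lie algebra. Since the left-invariant metric on $\R^d$ has vanishing Levi-Civita connection, every element of $\Lambda^3\ma^*$ is parallel and hence Killing, contributing exactly $\binom{d}{3}=d(d-1)(d-2)/6$ to $\dim \mathcal K^3(N,g)$.

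Next, I would reduce to the case without abelian factor. A priori $\ma$ could split off from some $\mn_i$ only through kernels of the $j$-maps on the irreducible summands, but by construction each $(\mn_i,g_i)$ is irreducible so its associated $j_i$ is injective (otherwise Remark \ref{an} would further split $\mn_i$ off a flat direction). Thus Proposition \ref{pro:abf} applied to $\mn$ shows that any Killing $3$-form on $\mn$ decomposes as $\gamma_1+\alpha_0$ with $\gamma_1\in\Lambda^3\ma^*$ (parallel) and $\alpha_0$ a Killing $3$-form on $\mn_0:=\bigoplus_{i=1}^q \mn_i$. Then I would iterate Proposition \ref{pro:alsum} on the orthogonal decomposition $\mn_0=\mn_1\oplus\cdots\oplus\mn_q$ to conclude that the Killing $3$-forms on $\mn_0$ are exactly the sums $\alpha_1+\cdots+\alpha_q$ with $\alpha_i\in \mathcal K^3(N_i,g_i)$. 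This proves the first assertion of the theorem: every invariant Killing $3$-form is the sum of left-invariant Killing $3$-forms on the de Rham factors.

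For the dimension count on each irreducible factor $(\mn_i,g_i)$, I would apply Proposition \ref{pro:onedim}, which states that $\mn_i$ admits a non-zero Killing $3$-form precisely when the two conditions \eqref{nr} hold for $j_i$, and in that case the space of Killing $3$-forms is one-dimensional. By Theorem \ref{teo:grd} of C. Gordon, these two conditions are equivalent to $(N_i,g_i)$ being naturally reductive as a homogeneous space under its isometry group (note that $(N_i,g_i)$ has no Euclidean factor since $\mn_i$ is irreducible non-abelian). Therefore $\dim \mathcal K^3(N_i,g_i)$ equals $1$ if $(N_i,g_i)$ is naturally reductive and $0$ otherwise, so the contribution from the non-Euclidean factors equals $r$, the number of naturally reductive de Rham factors.

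Adding the contributions from $\ma$ and from the $\mn_i$ yields
\begin{equation*}
\dim\mathcal K^3(N,g)=\frac{d(d-1)(d-2)}{6}+r,
\end{equation*}
as claimed. The proof is largely a synthesis of earlier statements; the only delicate point is ensuring that the de Rham splitting is compatible with the $j$-injectivity assumption used in Proposition \ref{pro:alsum} and Proposition \ref{pro:onedim}, which is why the reduction via Proposition \ref{pro:abf} must be carried out before invoking the iterated splitting.
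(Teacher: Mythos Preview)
Your proposal is correct and follows essentially the same approach as the paper: invoke Proposition \ref{dR} for the de Rham decomposition, use Propositions \ref{pro:abf} and \ref{pro:alsum} to split Killing $3$-forms along the factors, and then apply Proposition \ref{pro:onedim} together with Theorem \ref{teo:grd} on each irreducible piece. Your additional remark that each irreducible $(\mn_i,g_i)$ has injective $j_i$ (needed to invoke Proposition \ref{pro:alsum} and Proposition \ref{pro:onedim}) is a useful clarification that the paper leaves implicit.
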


\begin{proof}
Consider the de Rham decomposition of $(N,g)$ 
$$ (\R^d,g_0)\times (N_1,g_1)\times \ldots \times (N_q,g_q)$$
where $\R^d$ is the Euclidean factor, and the corresponding decomposition of its Lie algebra as in Proposition \ref{dR}, $(\mn,g)=(\ma,g_0)\oplus\bigoplus_{i=1}^q (\mn_i,g_i)$.

Propositions \ref{pro:abf} and \ref{pro:alsum} imply that the space of invariant Killing $3$-forms on $(N,g)$ is the direct sum of the spaces of invariant Killing $3$-forms on $(\R^d,g_0)$ and $(N_i,g_i)$. 

Any left-invariant differential $3$-form on $\R^d$ is parallel, thus Killing, and, according to Proposition \ref{pro:onedim} and Theorem \ref{teo:grd}, the space of left-invariant Killing $3$-forms on $(N_i,g_i)$ is one-dimensional if $N_i$ is naturally reductive and zero otherwise.
\end{proof}

Like in the previous section, we might ask at this point: {\em Which simply connected 2-step nilpotent Lie groups carry a left-invariant Riemannian metric admitting non-zero Killing $3$-forms?} In order to answer this question, we need again to introduce some terminology.

\begin{defi}
A 2-step nilpotent Lie algebra $\mn$ is called of naturally reductive type if it has no abelian factor (see Definition \ref{def:factor}) and admits an inner product $g$ such that $(\mn,g)$ satisfies \eqref{nr}.
\end{defi}

Notice that the condition for $\mn$ to have no abelian factor implies that for every choice of inner product on $\mn$, the map $j$ is injective; hence \eqref{nr} makes sense.

\begin{cor}\label{fac3}
A simply connected 2-step nilpotent Lie group $N$, with corresponding Lie algebra $\mn$,  admits a left-invariant  Riemannian metric $g$ such that $(N,g)$ carries non-zero Killing $3$-forms if and only if one of the following (non-exclusive) conditions holds:
\begin{enumerate}
\item $\mn$ has an abelian $3$-dimensional factor,
\item $\mn$ has a non-abelian factor of naturally reductive type.
\end{enumerate}
\end{cor}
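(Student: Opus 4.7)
The plan is to follow the template set by the proof of Corollary \ref{fac2} for Killing $2$-forms, substituting Theorem \ref{th5} and its dimension formula \eqref{teo3} for Theorem \ref{t2} and \eqref{teo2}. The whole argument reduces to translating the two combinatorial contributions on the right-hand side of \eqref{teo3}, namely the Euclidean contribution $\frac{d(d-1)(d-2)}{6}$ and the naturally reductive contribution $r$, into the two conditions (1) and (2) stated in the corollary.

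For the direct implication, I would start from a metric $g$ with $\dim \mathcal{K}^3(N,g)\geq 1$ and invoke \eqref{teo3} to force $d\geq 3$ or $r\geq 1$. If $d\geq 3$, the Euclidean factor corresponds, via Remark \ref{an}, to the abelian ideal $\ma$, which splits off as $\mn=\ma\oplus\mn_0$; any $3$-dimensional subspace of $\ma$ is central and complemented by an ideal, so it is a $3$-dimensional abelian factor in the sense of Definition \ref{def:factor}, giving (1). If instead $r\geq 1$, pick an irreducible naturally reductive de Rham factor $(\mn_i,g_i)$; by Proposition \ref{dR} it is an ideal factor of $\mn$, it admits the inner product $g_i$ satisfying \eqref{nr} (by Theorem \ref{teo:grd}), and it has no abelian factor, giving (2). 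For the absence of abelian factor in $\mn_i$, the point is that any abelian ideal factor of $\mn_i$ would have to lie in the center and be disjoint from $[\mn_i,\mn_i]$, hence in $\ker j_i$; but $(\mn_i,g_i)$ irreducible forces $\ker j_i=0$ via Remark \ref{an}.

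For the converse, in case (1), writing $\mn=\R^3\oplus\mh$ and endowing $\R^3$ with the standard inner product and $\mh$ with an arbitrary one, the metric Lie algebra $(\mn,g)$ has Euclidean factor of dimension at least $3$, so $\dim\mathcal{K}^3(N,g)\geq \frac{3\cdot 2\cdot 1}{6}=1$ by \eqref{teo3}. In case (2), write $\mn=\mh\oplus\tilde{\mh}$ with $\mh$ non-abelian of naturally reductive type, pick an inner product $g_{\mh}$ on $\mh$ satisfying \eqref{nr}, and any inner product $g_{\tilde{\mh}}$ on $\tilde{\mh}$; set $g=g_{\mh}\oplus g_{\tilde{\mh}}$. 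Since $\mh$ has no abelian factor, the de Rham decomposition of $(\mh,g_{\mh})$ has no Euclidean part and consists only of irreducible factors $\mh_i$. One checks that \eqref{nr} descends to each $\mh_i$: the orthogonal decomposition $\mz(\mh)=\bigoplus\mz(\mh_i)$ splits $j$ block-diagonally, so being a subalgebra of $\so(\mv(\mh))$ and skew-symmetry of $z'\mapsto j^{-1}[j(z),j(z')]$ factor through each block. Hence each $\mh_i$ is naturally reductive by Theorem \ref{teo:grd}, contributing at least $1$ to the count $r$ in the de Rham decomposition of $(\mn,g)$, so $\dim\mathcal{K}^3(N,g)\geq 1$.

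The main obstacle is conceptual rather than computational: one must carefully reconcile the two notions of factor present in the paper, namely orthogonal-ideal factors arising in the de Rham decomposition (Definition \ref{irr}) and purely algebraic ideal factors (Definition \ref{def:factor}) that enter the definition of naturally reductive type. The key observation, used in both directions, is that for $2$-step nilpotent metric Lie algebras, irreducibility of $(\mn,g)$ forces $j$ to be injective (Remark \ref{an}), and injectivity of $j$ in turn prevents any nontrivial abelian ideal from splitting off algebraically. Once this equivalence is established, the remaining verifications, that \eqref{nr} is stable under restriction to orthogonal ideal summands and that \eqref{teo3} yields the stated dichotomy, are formal.
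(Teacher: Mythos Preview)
Your proposal is correct and follows essentially the same approach as the paper: both directions are deduced from the dimension formula \eqref{teo3} in Theorem \ref{th5}, exactly as in the proof of Corollary \ref{fac2}. You supply two details that the paper leaves implicit—namely, that an irreducible de Rham factor has no algebraic abelian factor (so it is genuinely of naturally reductive type), and that condition \eqref{nr} descends from $(\mh,h)$ to each of its irreducible orthogonal summands—but these are refinements of the same argument, not a different route.
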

\begin{proof}
If there exists a left-invariant Riemannian metric $g$ such that $\dim \mathcal K^3(N,g)\geq 1$ then, according to \eqref{teo3} we have that either $d\geq 3$, i.e., the Euclidean factor of $(N,g)$ has dimension at least $3$ or $r\geq 1$, that is, $\mn$ has a factor $\mn_i$ of naturally reductive type.

Conversely, if $\mn$ has a $3$-dimensional abelian factor so that $\mn=\R^3\oplus \mh$ we define an inner product of $\mn$ adding the standard inner product of $\R^3$ and an arbitrary inner product on the ideal $\mh$. The corresponding left-invariant Riemannian metric on $N$ has an Euclidean factor of dimension at least $3$, thus $\dim \mathcal K^3(N,g)\geq 1$. 

Suppose now that $\mn$ is a direct sum of ideals $\mn=\mh\oplus\tilde\mh$ and $\mh$ is of naturally reductive type. Let $h$ be an inner product on $\mh$ such that the corresponding map $j$ satisfies \eqref{nr}.
Extending $h$ to an inner product $g$ on $\mn$ by adding any inner product on $\tilde \mh$, we obtain by Theorem \ref{th5} that the corresponding simply connected Riemannian Lie group $(N,g)$ carries a non-zero Killing $3$-form.
\end{proof}

It was noticed by Lauret \cite{LA} that 2-step nilpotent Lie algebras of naturally reductive type are related to representations of compact Lie algebras. Indeed, let  $(\mn,g)$ be a 2-step nilpotent Lie algebra endowed with an inner product such that $j:\mz\lra \so(\mv)$ is injective and satisfies  \eqref{nr}. It turns out that the bilinear map $\mz\times\mz\to\mz$ given by $[[z,z']]:=j^{-1}([j(z),j(z')])$ is a well defined Lie bracket on $\mz$ with the property
$$ \lela [[z,z']],z''\rira+\lela z',[[z,z'']]\rira=0, \quad\mbox{ for all } z,z'\in \mz.$$
Hence $(\mz,[[\, , \,]])$ is a compact Lie algebra and $j:\mz\lra \so(\mv)$ is a faithful representation such that $\bigcap_{z\in \mz}\ker j(z)=0$. Conversely, given a faithful representation $\rho:\mz\lra\so(\mv,g_{\mv})$ without trivial sub-representations  of a compact Lie algebra $\mz$, and an ad-invariant inner product $g_\mz$ on $\mz$, the vector space $\mn:=\mv\oplus \mz$ together with the inner product $g=g_\mv+g_\mz$ becomes a 2-step nilpotent metric Lie algebra by defining the Lie bracket as:
$$\lela [x,y],z\rira=g_{\mv}(\rho(z)x,y), \quad \mbox{ and } \quad [x+z,z']=0, \quad \mbox{for all }x,y\in \mv, \,z,z'\in \mz.  $$
The associated map $j$ coincides with $\rho$ and satisfies \eqref{nr}, since $g_\mz$ is ad-invariant, so $\mn$ is of naturally reductive type.

Using this approach we will now give a short description 2-step nilpotent Lie algebras of naturally reductive of dimension $\leq 6$. To this purpose, we need to study compact Lie algebras having faithful orthogonal representations of dimension $m$ without trivial factors such that $m+\dim z\leq 6$. Clearly, this implies $\dim \mz\leq 3$.

If $\dim \mz=1$ every faithful orthogonal representation without trivial sub-representations is even dimensional ($n=2l$) and for every metric on $\mz$ the corresponding 2-step nilpotent Lie algebra is isomorphic to the Heisenberg Lie algebra $\mh_{2l+1}$. In dimension $\leq 6$ we thus obtain $\mh_{3}$ and $\mh_5$. 

If $\dim\mz=2$, the unique 2-dimensional compact Lie algebra is $\R^2$, which has no faithful orthogonal representations in dimension $\le 3$ and every such representation in dimension $4$ together with any metric on $\mz$ give rise to a 2-step nilpotent Lie algebra isomorphic to $\mh_3\oplus \mh_3$.

If $\dim\mz=3$, the unique representation without trivial sub-representations of a compact 3-dimensional Lie algebra in dimension $\le 3$ is the standard representation of $\so(3)$ on $\R^3$. For every ad-invariant metric on $\mz$, the corresponding 2-step nilpotent Lie algebra is isomorphic the free 2-step nilpotent Lie algebra $\mn_{3,2}$ on $3$-generators, which has a basis $\{e_1, \ldots, e_6\}$ satisfying
$$[e_1,e_2]=e_4, \;[e_1,e_3]=e_5,\; [e_2,e_3]=e_6.$$

From Corollary \ref{fac3}, we obtain the following classification result.

\begin{teo} There exist exactly $8$ isomorphism classes of (non-abelian) $2$-step nilpotent Lie algebras of dimension $p\leq 6$ admitting an inner product for which the corresponding simply connected Riemannian Lie group carries non-zero Killing $3$-forms:
\begin{itemize}
\item $p=3$: $\mh_3$;
\item $p=4$: $\R\oplus \mh_3$;
\item $p=5$: $\R^2\oplus \mh_3$ and $\mh_5$;
\item $p=6$: $\R^3\oplus \mh_3$, $\mh_3\oplus \mh_3$, $\R\oplus \mh_5$, $\mn_{3,2}$.
\end{itemize}
\end{teo}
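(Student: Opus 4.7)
The proof should proceed by directly invoking Corollary \ref{fac3}, which reduces the classification of $2$-step nilpotent Lie algebras admitting a metric with non-zero Killing $3$-forms to identifying, in each dimension $p\leq 6$, those non-abelian Lie algebras $\mn$ which decompose as a direct sum of ideals $\mn=\mh\oplus\tilde\mh$ with either $\mh$ abelian of dimension at least $3$, or $\mh$ of naturally reductive type. The paragraph preceding the statement has already enumerated the non-abelian $2$-step nilpotent Lie algebras of naturally reductive type up to dimension $6$: they are exactly $\mh_3$, $\mh_5$, $\mh_3\oplus\mh_3$ and $\mn_{3,2}$. Together with the classification of $\mathcal N_p$ for $p\leq 6$ from \cite{MA} recalled in Remark \ref{class}, this provides all the ingredients.

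The plan is then to go through dimensions one by one. For $p=3$ and $p=4$, the lists $\mathcal N_3=\{\mh_3\}$ and $\mathcal N_4=\{\R\oplus\mh_3\}$ are trivially seen to consist of algebras with a naturally reductive factor, producing the first two entries of the theorem. For $p=5$, I would spell out the three algebras in $\mathcal N_5$ from \cite{MA} and identify $\R^2\oplus\mh_3$ and $\mh_5$ as those with the required decomposition, while checking that the remaining algebra is indecomposable and not in the naturally reductive list. For $p=6$, one should scan all seven algebras in $\mathcal N_6$, confirming that precisely four of them, namely $\R^3\oplus\mh_3$ (with an abelian $3$-factor), $\R\oplus\mh_5$, $\mh_3\oplus\mh_3$ and $\mn_{3,2}$, admit the required decomposition.

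The main obstacle will be the negative part of this verification, i.e., ruling out the remaining one algebra of $\mathcal N_5$ and the remaining three algebras of $\mathcal N_6$: for each of these one must confirm that it admits neither an abelian factor of dimension $\geq 3$ nor a non-abelian factor isomorphic to one of $\mh_3, \mh_5, \mh_3\oplus\mh_3, \mn_{3,2}$. This amounts to reading off, from the explicit bracket descriptions in \cite{MA}, the dimensions of the center and commutator and examining the possible direct-sum decompositions. It is finite but tedious bookkeeping; it can be streamlined by observing that any non-abelian factor has center of dimension at least $1$ contained in the commutator, which sharply restricts the possible splittings and quickly eliminates the excluded candidates. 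Summing the counts $1+1+2+4=8$ then yields the stated cardinality.
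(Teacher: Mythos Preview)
Your proposal is correct and follows exactly the paper's approach: the paper's proof consists of the single sentence ``From Corollary \ref{fac3}, we obtain the following classification result'' together with the enumeration of naturally reductive type algebras in dimension $\le 6$ carried out in the preceding paragraphs. You simply spell out in more detail the dimension-by-dimension verification (including the negative checks against the list of $\mathcal N_5$ and $\mathcal N_6$ from \cite{MA}) that the paper leaves implicit.
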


Like in Example \ref{ex1} one can construct, by using Theorem 4.3 in \cite{LA}, families of non-isometric left-invariant Riemannian metrics with non-zero Killing 3-forms on each of the above Lie groups.

We have seen in Theorems \ref{t2} and \ref{th5} that for $k=2$ and $k=3$, any Killing $k$-form on a 2-step nilpotent Riemannian Lie group is a sum of Killing $k$-forms on the de Rham factors. 
Our next result shows that a non-flat irreducible de Rham factor cannot carry both Killing $2$-forms and Killing $3$-forms. 

\begin{pro} \label{p4} A $2$-step nilpotent Lie group $N$ endowed with a left-invariant metric $g$ which is de Rham irreducible cannot admit non-zero Killing $2$-forms and non-zero Killing $3$-forms simultaneously.
\end{pro}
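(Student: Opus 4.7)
The plan is to combine the structural information provided by the two hypotheses and derive a contradiction. Suppose by contradiction that the de Rham irreducible Riemannian Lie group $(N,g)$ admits simultaneously a non-zero Killing $2$-form and a non-zero Killing $3$-form. By Proposition \ref{pro:kb}, the Killing $2$-form produces a bi-invariant orthogonal complex structure $J$ on $\mn$; $J$ preserves both $\mv$ and $\mz$ by bi-invariance, and by the argument in Remark \ref{p1} it satisfies $j(Jz)=Jj(z)=-j(z)J$ for every $z\in\mz$, so $J$ anti-commutes with every element of $j(\mz)$. By Proposition \ref{pro:onedim}, the existence of a non-zero Killing $3$-form forces the first condition in \eqref{nr}: $j(\mz)$ is a Lie subalgebra of $\so(\mv)$. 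Finally, by Remark \ref{an} together with irreducibility, $j$ is injective (otherwise $\ma\ne 0$ would give a non-trivial orthogonal decomposition $\mn=\ma\oplus\mn_0$, and $\mn$ being non-abelian excludes the case $\mn_0=0$).

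The key step is to compute the commutator $[j(z),j(Jz)]$ for an arbitrary $z\in\mz$. Since $j(Jz)=Jj(z)\in j(\mz)$, the subalgebra property forces $[j(z),j(Jz)]\in j(\mz)$. On the other hand, a direct expansion using only $Jj(z)=-j(z)J$ yields $[j(z),Jj(z)]=-2Jj(z)^2$. To close the argument I would observe that $j(z)^2$ commutes with $J$ (anti-commute twice), so $Jj(z)^2$ also commutes with $J$, whereas every element of $j(\mz)$ anti-commutes with $J$. Thus $-2Jj(z)^2$ simultaneously commutes and anti-commutes with $J$, hence vanishes; as $J$ is invertible, $j(z)^2=0$, and since $j(z)$ is real and skew-symmetric this forces $j(z)=0$. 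Injectivity of $j$ then gives $z=0$, and choosing any non-zero $z$ yields the desired contradiction.

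The main difficulty is not computational but conceptual: identifying $[j(z),j(Jz)]$ as the single element that detects both hypotheses at once. The Killing $2$-form produces the explicit form $-2Jj(z)^2$, which commutes with $J$; the Killing $3$-form (through the subalgebra condition) forces the same element to anti-commute with $J$. These two constraints are incompatible unless $j(z)=0$. Once this test element has been identified, the remainder of the argument is a short algebraic verification using only $J^2=-\mathrm{Id}$ and the anti-commutation relation $Jj(z)=-j(z)J$ established in Remark \ref{p1}; in particular, no appeal to a decomposition of $\mn$ is needed beyond the injectivity of $j$.
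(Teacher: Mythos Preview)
Your proof is correct and follows essentially the same approach as the paper's. Both arguments hinge on the identity $[j(z),j(Jz)]=-2Jj(z)^2$ together with the observation that this element must simultaneously commute with $J$ (from the explicit form) and anti-commute with $J$ (as a member of $j(\mz)$), forcing $j(z)=0$. The only cosmetic difference is that the paper first records the intermediate consequence that $j(\mz)$ is abelian (all brackets $[j(z),j(z')]$ vanish by the same commute/anti-commute reasoning) before specializing to $z'=Jz$, whereas you go straight to that particular bracket; your route is marginally more direct but the mechanism is identical.
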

\begin{proof}
If $(N,g)$ admits non-zero Killing 2-forms, then by Propositions \ref{pro:kb}, its Lie algebra $\mn$ admits an orthogonal bi-invariant complex structure $J$. Thus, by using \eqref{j}, we obtain that $J$ anti-commutes with $j(z)$ for every $z\in\mz$ and therefore, 
\begin{equation}\label{jJ}
J[j(z),j(z')]=[j(z),j(z')]J, \quad\mbox{ for all } z,z'\in\mz.
\end{equation}

If in addition $(N,g)$ admits non-zero Killing 3-forms, then $j(\mz)$ is a subalgebra, because of Proposition \ref{pro:onedim}.  Hence,  \eqref{jJ} implies that $J$ commutes and anti-commutes with each bracket of elements of $j(\mz)$, whence $j(\mz)$ is an abelian subalgebra of $\so(\mv)$. In particular, using \eqref{j} we obtain
 $$0=[j(z),j(Jz)]=[j(z),Jj(z)] = -2j(z)^2J \quad \mbox{ for all }z\in \mz,$$ which implies $j(z)=0$ for every $z\in\mz$, leading to a contradiction.
\end{proof}

This actually shows that the naturally reductive and complex worlds are disjoint on 2-step nilpotent Lie groups:

\begin{cor}
A naturally reductive $2$-step nilpotent Lie group endowed with a left-invariant metric does not admit orthogonal bi-invariant complex structures. 
\end{cor}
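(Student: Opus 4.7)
Assume for contradiction that $(N,g)$ is naturally reductive and that $\mn$ admits an orthogonal bi-invariant complex structure $J$. The plan is to observe that the argument of Proposition \ref{p4} is in fact insensitive to the irreducibility hypothesis once its two algebraic inputs are in place, and then to supply both of them directly in the present setting. These inputs are: (i) $Jj(z)+j(z)J=0$ for every $z\in\mz$, and (ii) $j(\mz)$ is a subalgebra of $\so(\mv)$.

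Ingredient (i) is a consequence of the bi-invariance and orthogonality of $J$ alone, exactly as derived in Remark \ref{p1}. For ingredient (ii), the plan is to pass to the non-Euclidean part. Set $\ma:=\ker j$ and $\mn_0:=\mv\oplus\ma^\perp$ as in Remark \ref{an}, so that $\ma$ is the flat de Rham factor of $(N,g)$ and $j$ restricted to $\mn_0$ is injective. The identity $j(Jz)=-j(z)J$ from \eqref{j} sends $\ker j$ into itself, so $J$ preserves $\ma$ and hence, by orthogonality, also $\mn_0$. Since natural reductivity restricts to each de Rham factor of a Riemannian product (a standard fact coming from the splitting of transitive isometry groups along the de Rham decomposition), the factor $(N_0,g_0)$ is itself naturally reductive; as $\mn_0$ has no Euclidean factor, Theorem \ref{teo:grd} gives that $\mn_0$ satisfies \eqref{nr}. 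In particular $j(\mz)=j_0(\mz_0)$ is a subalgebra of $\so(\mv)=\so(\mv_0)$.

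With (i) and (ii) in hand, the computation from the proof of Proposition \ref{p4} runs verbatim. Anticommutation in (i) implies $J$ commutes with each product $j(z)j(z')$, and hence with every bracket $[j(z),j(z')]$; by (ii) any such bracket equals $j(w)$ for some $w\in\mz$, and (i) applied to $w$ shows that $J$ also anticommutes with $j(w)$, forcing $j(w)=0$. Thus $j(\mz)$ is abelian. Taking $z'=Jz$ and noting $j(Jz)=Jj(z)$ (combine $j(Jz)=-j(z)J$ with (i)), the computation
\begin{equation*}
0=[j(z),Jj(z)]=j(z)Jj(z)-Jj(z)^2=-2Jj(z)^2
\end{equation*}
yields $j(z)^2=0$, and then $j(z)=0$ by skew-symmetry. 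Hence $j\equiv 0$, so $\mn'=0$ and $\mn$ is abelian, contradicting the $2$-step nilpotent hypothesis.

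The main obstacle I expect is the descent of natural reductivity to the non-Euclidean de Rham factor, which is needed to invoke Theorem \ref{teo:grd} on $\mn_0$ rather than on $\mn$ itself when $\ma\neq 0$. This is a standard property of de Rham decompositions of Riemannian homogeneous spaces, but it is the only step that is not either purely algebraic or quoted directly from the body of the paper.
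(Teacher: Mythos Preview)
Your argument is correct and is essentially the same as the paper's, with a minor variation in how the reduction is organised. The paper passes all the way to the irreducible de Rham factors, observes that each non-flat factor inherits both the natural reductivity and the bi-invariant orthogonal complex structure, and then quotes Proposition~\ref{pro:kb} and Theorem~\ref{th5} to produce a non-zero Killing $2$-form and a non-zero Killing $3$-form on such a factor, contradicting Proposition~\ref{p4}. You instead strip off only the Euclidean factor and re-run the algebraic computation of Proposition~\ref{p4} directly on $\mn_0$, correctly noting that this computation never uses irreducibility. Both routes hinge on the descent of natural reductivity along the de Rham decomposition; the paper leaves this step implicit, whereas you flag it explicitly.
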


\begin{proof} Indeed, if $(N,g)$ is naturally reductive and has a bi-invariant orthogonal complex structure, each irreducible de Rham factor has the same property, thus carries a non-zero Killing 2-form by Proposition \ref{pro:kb} and a non-zero Killing 3-form by Theorem \ref{th5}, contradicting Proposition \ref{p4}.
\end{proof}

\bibliographystyle{plain}
\bibliography{biblio}

\end{document}